\newtheorem{Thm}{Theorem}{\bfseries}{\itshape}
\newtheorem{Cor}{Corollary}{\bfseries}{\itshape}
\newtheorem{Prop}[Cor]{Proposition}{\bfseries}{\itshape}
\newtheorem{Lem}[Cor]{Lemma}{\bfseries}{\itshape}
\newtheorem{Conj}[Cor]{Conjecture}{\bfseries}{\itshape}
\newtheorem{Def}[Cor]{Definition}{\bfseries}{\rmfamily}
{\scshape}{\rmfamily}
\newtheorem{Rem}[Cor]{Remark}{\scshape}{\rmfamily}
\newtheorem{Transformation}{Transformation}{\bfseries}{\itshape}
\renewcommand\ge{\geqslant} \renewcommand\le{\leqslant}
\let\tildeaccent=\~ \let\hataccent=\^
\renewcommand\~[1]{\widetilde{#1}} \renewcommand\^[1]{\widehat{#1}}
\def\<{\left<} \def\>{\right>} \def\({\left(} \def\){\right)}
\def\abs#1{\left\vert #1 \right\vert} \def\size#1{\mathbf S\left(#1
  \right)}
\let\polishL=l \def\Zoladek.{\.Zol\c adek}
\def\Var{\operatorname{Var}} 
 \def\Mat{\operatorname{Mat}}
\def\Re{\operatorname{Re}} \def\Im{\operatorname{Im}}
\def\Arg{\operatorname{Arg}} 
 \def\ord{\operatorname{ord}}
\def\GL{\operatorname{GL}} \def\etc.{\emph{etc}.}
\def\:{\colon} \def\R{{\mathbb R}} \def\C{{\mathbb C}} \def\Z{{\mathbb
    Z}} \def\N{{\mathbb N}} \def\Q{{\mathbb Q}} 
 \def\e{\varepsilon} 
\def\l{\lambda}   
 \def\diag{\operatorname{diag}}
\def\poly{\operatorname{\textup{Poly}}}
 \def\d{\,\mathrm d}
 \let\PolishL=\L \def\Lojas.{\PolishL ojasiewicz}
\def\L{\varLambda} \def\cN{{\mathcal N}} \def\cH{{\mathcal H}}
\def\cP{{\mathcal P}}
\begin{document}

\title{A uniform version of the Petrov-Khovanskii theorem} \author{Gal
  Binyamini \and Gal Dor}


\begin{abstract}
  An Abelian integral is the integral over the level curves of a
  Hamiltonian $H$ of an algebraic form $\omega$. The infinitesimal
  Hilbert sixteenth problem calls for the study of the number of zeros
  of Abelian integrals in terms of the degrees $H$ and
  $\omega$. Petrov and Khovanskii have shown that this number grows at
  most linearly with the degree of $\omega$, but gave a purely
  existential bound. Binyamini, Novikov and Yakovenko have given an
  \emph{explicit} bound growing doubly-exponentially with the degree.

  We combine the techniques used in the proofs of these two results,
  to obtain an explicit bound on the number of zeros of Abelian
  integrals growing linearly with $\deg\omega$.
\end{abstract}

\thanks{The work was supported by the ISF grant 493/09 and the
  Legacy-Heritage foundation mentoring program}

\maketitle

\section{Introduction}
\label{sec:introduction}

Let $H$ be a real bivariate polynomial and $\omega$ a one-form on
$\R^2$. Let $\delta_t\subseteq\{H=t\}$ denote a continuous family of
real ovals. Consider the \emph{Abelian integral}
\begin{equation}
  I_{H,\omega}(t)=\int_{\delta_t}\omega
\end{equation}
The \emph{infinitesimal} Hilbert Sixteenth problem calls for the study
of the zero set
\begin{equation}
  Z_{H,\omega}=\{t:I_{H,\omega}(t)=0\}.
\end{equation}
In particular, the goal is to obtain an upper bound
$\cN(\deg H,\deg\omega)$ on $\# Z_{H,\omega}$ depending solely on the
degrees of $H$ and $\omega$. Here and in the rest of the paper $\# A$
denotes the number of isolated points in the set $A$.

The infinitesimal Hilbert problem is motivated by the study of
\emph{limit cycles} born from the perturbation $\d H+\e\omega=0$ of
the Hamiltonian system $\d H=0$. In particular, the existence of the
uniform bound $\cN(\deg H,\deg\omega)$ may be seen as a particular
case of the general Hilbert sixteenth problem. We refer the reader to
the surveys \cite{centennial,h16survey} for further
details and references.

\subsection{Background}

The first \emph{general} result concerning the infinitesimal Hilbert
problem was given in \cite{varchenko:finiteness,asik:finiteness}:
\begin{Thm}\label{thm:vk}
  \begin{equation}
    \cN(n,m) < \infty
  \end{equation}
\end{Thm}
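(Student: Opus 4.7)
The plan is to argue in two stages: first establish individual finiteness (for each fixed pair $(H,\omega)$, the set $Z_{H,\omega}$ is finite), and then upgrade this to uniform finiteness over the finite-dimensional parameter space of pairs of bounded degrees by a compactness/semicontinuity argument.

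For the first stage, I would use the fact that the Abelian integrals $I_{H,\omega_1},\dots,I_{H,\omega_r}$, where $\omega_1,\dots,\omega_r$ is a basis of the Petrov module for $H$ (for generic $H$, say Morse with distinct critical values), span a local system of solutions of the Picard--Fuchs system — a linear ODE with rational coefficients whose singular locus is contained in the set of critical values of $H$. On any open interval of regular values, $I_{H,\omega}$ is real-analytic and has only isolated zeros, and near each critical value its asymptotics are controlled by monodromy (logarithmic/power behavior). Thus at most finitely many zeros accumulate on each side of a critical value, giving $\#Z_{H,\omega}<\infty$ for fixed $(H,\omega)$. For non-generic $H$, one can perturb inside the space of polynomials of degree $\leq n$; the Picard--Fuchs system deforms continuously, and a limiting argument transfers finiteness to the degenerate case.

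For the second stage, I would argue by contradiction. Suppose $\cN(n,m)=\infty$; then there is a sequence $(H_k,\omega_k)$ with $\deg H_k\leq n$, $\deg\omega_k\leq m$, and $\#Z_{H_k,\omega_k}\to\infty$. Normalize coefficients and pass to a subsequence converging projectively to a limit $(H_\infty,\omega_\infty)$. The zeros $t_{k,1},\dots,t_{k,N_k}$ of $I_{H_k,\omega_k}$ then have an accumulation point $t_*$ in the one-point compactification of $\R$, and one seeks a contradiction with the already-established individual finiteness for the limit. One way to package this cleanly is to appeal to Khovanskii's theory: the family of Abelian integrals, indexed by the parameters, forms a semi-Pfaffian/``Noetherian'' family (the Picard--Fuchs operator depends polynomially on the coefficients of $H$), and the number of isolated zeros of a Pfaffian function is upper semicontinuous in the defining data once the data stays in a compact set.

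The main obstacle is precisely the lack of compactness of the parameter space and the possible escape of zeros to infinity, to critical values of $H_k$, or into continuous families of zeros of the degenerating $I_{H_k,\omega_k}$ (for instance when the family of ovals $\delta_t$ degenerates, so that $I_{H_\infty,\omega_\infty}$ is identically zero on some interval). Controlling this requires a careful analysis of the boundary strata of the parameter space: one must either rule out such degenerations after a suitable projective compactification, or else subdivide the parameter space into finitely many strata on each of which the local system of Abelian integrals extends continuously and the semicontinuity argument applies. This stratified boundary analysis — essentially a qualitative study of the Picard--Lefschetz / Gauss--Manin behavior under degeneration of $H$ — is where the real work lies, and is the reason the original proof gives only an existential, not effective, bound.
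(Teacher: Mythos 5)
First, a point of comparison: the paper does not actually prove Theorem~\ref{thm:vk}; it is quoted from Varchenko and Khovanskii, so there is no in-paper argument to measure you against. Your outline does follow the general shape of those original proofs: finiteness for each fixed pair via analyticity on regular intervals together with regular singular asymptotics of the Picard--Fuchs system, followed by an attempt to make this uniform over the (projectivized) parameter space of pairs $(H,\omega)$ of bounded degree.

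As a proof, however, your second stage has a genuine gap, and it is exactly the step that constitutes the theorem. The contradiction argument needs the number of isolated zeros to behave upper semicontinuously as $(H_k,\omega_k)$ degenerates to $(H_\infty,\omega_\infty)$, and that is precisely what fails naively: the zeros $t_{k,j}$ can accumulate at a critical value of $H_\infty$ or at infinity (where the limit integral is singular and where critical values of $H_k$ may collide), the family of ovals $\delta_t$ may disappear in the limit, and $I_{H_\infty,\omega_\infty}$ may vanish identically, so the individual finiteness established in your first stage yields no contradiction at the limit. Note also that the semicontinuity actually available --- the lower semicontinuity of $\cN(\cdot)$ recorded in Remark~\ref{rem:semicontinuity} --- points the opposite way: it propagates uniform upper bounds from generic parameters to degenerate ones, but it cannot be used to infer many zeros of the limit from many zeros nearby. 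The appeal to Pfaffian/fewnomial theory does not close this either: Abelian integrals are multivalued with nontrivial monodromy and are not Pfaffian functions of $t$, and Khovanskii's finiteness argument is not a formal semicontinuity statement but a delicate induction over the boundary strata of a compactification of the parameter space, using Picard--Lefschetz theory and the regularity and quasi-unipotence of the Gauss--Manin connection to control the integrals under degeneration. You correctly identify this stratified boundary analysis as ``where the real work lies,'' but it is deferred rather than carried out; without it the argument establishes only the much easier finiteness for each fixed pair, not the uniform statement $\cN(n,m)<\infty$.
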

In other words, the number of zeros of Abelian integral is uniformly
bounded in terms of the degrees of the Hamiltonian $H$ and the form
$\omega$. However, this result is purely existential and does not give
an explicit bound for $\cN(n,m)$.

The following uniform upper bound, established in \cite{inf16}, constitutes
an explicit solution for the infinitesimal Hilbert problem.

\begin{Thm}\label{thm:bny}
  \begin{equation}\label{eq:bny-bound}
    \cN(n,n)\le2^{2^{\poly(n)}}
  \end{equation}
  where $\poly(n)$ denotes an explicit polynomial of degree not
  exceeding 61.
\end{Thm}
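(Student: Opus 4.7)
The plan is to encode the Abelian integrals as components of the solution to an explicit Fuchsian system of linear ODEs on $\P^1$, and then bound the number of isolated zeros of such a solution via a complex-analytic substitute for the Khovanskii--Rolle zero-counting method. First, I would work in the relative de Rham cohomology of the pencil $H\colon\C^2\to\C$: choose a basis $\omega_1,\dots,\omega_N$ of monomial one-forms modulo exact and $dH$-relative forms, with $N$ polynomial in $n$, and apply Gelfand--Leray differentiation to obtain a linear system
\[
  p(t)\,I'(t) = A(t)\,I(t), \qquad I(t) = (I_{H,\omega_j}(t))_{j=1}^N,
\]
where $p$ is the discriminant of the pencil and $A$ is a polynomial matrix. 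Since a form $\omega$ of degree at most $n$ decomposes in this basis with controlled coefficients, it suffices to bound the zeros of an arbitrary linear combination of the components of $I(t)$. Effective bounds on the degrees and heights of $p$ and $A$ would follow from effective division with remainder in the ideal of critical points of $H$, together with a standard estimate of the Petrov--Brieskorn module.

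Next, I would apply the Growth-and-Zeros theorem of Novikov--Yakovenko: on a complex disk $U\subset\C\setminus\Sing(p)$, the number of isolated zeros of a fixed linear combination of components of $I$ is bounded linearly by the Bernstein index $\log(\sup_U\|I\|/\inf_{U'}\|I\|)$ on a concentric subdisk $U'$. This reduces the problem to a two-sided uniform estimate for a fundamental matrix of the Picard--Fuchs system on a cover of $\P^1$ avoiding $\Sing(p)$. To produce such an estimate one uses that the system is Fuchsian with quasi-unipotent local monodromy whose Jordan blocks have size at most $N$: decompose $\P^1$ into a compact regular piece and small punctured disks around each singular point, bound the fundamental matrix on the compact part by direct integration of the system (single exponential in the slopes), and control its growth near each puncture using quasi-unipotency. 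Accumulating these bounds along a path visiting all singular points yields a double-exponential upper bound on the Bernstein index, which combined with the previous step produces \eqref{eq:bny-bound}.

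The principal obstacle is the uniformity of the fundamental-matrix estimate: the bounds must be independent of the positions and multiplicities of the critical values of $H$, which may collide or escape to infinity as $H$ varies inside the space of degree-$n$ polynomials. Handling this requires choosing an integration contour that keeps a controlled distance from $\Sing(p)$, together with a cluster analysis of the discriminant near colliding critical values so that the local quasi-unipotent estimates can be merged without catastrophic loss. This is where most of the technical effort would be concentrated, and is ultimately responsible for the very large exponent (up to $61$) in $\poly(n)$.
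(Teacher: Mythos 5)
Your outline does follow, at the level of strategy, the proof given in \cite{inf16} (which is all this paper does for this statement --- it quotes the result and, later, its two pillars): an effective derivation of the Picard--Fuchs system for a monomial basis of the relative cohomology, zero counting via the argument principle together with a growth-and-zeros (Bernstein index) estimate, polynomial growth from regularity and control of local exponents from quasi-unipotence, and an induction over clusters of singularities.

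There is, however, a genuine gap precisely at the step you single out as the principal obstacle: uniformity over the class of degree-$n$ Hamiltonians. You propose to fix $H$, bound the ``degrees and heights'' of $p$ and $A$, and then recover uniformity by a careful choice of contour plus a cluster analysis of the discriminant of that fixed pencil. For a fixed real Hamiltonian the coefficients of $p$ and $A$ are arbitrary real numbers, so there is no height to bound; and fiberwise the Bernstein index of the fundamental matrix cannot be estimated in terms of degree data alone --- the constants blow up as critical values collide or escape to infinity, which is exactly why the fiberwise Petrov--Khovanskii argument produces only the existential term $b(H)$ of Theorem~\ref{thm:pk}. The mechanism in \cite{inf16} is different: the coefficients $\lambda$ of $H$ are treated as additional variables, and one works with the full integrable Gauss--Manin connection over the parameter space $\C P^m$, $m=O(n^2)$, which is a rational matrix one-form \emph{defined over $\Q$} of integer size $s\le 2^{\poly(n)}$ (Theorem~\ref{thm:bny-gauss-manin}); the counting theorem is then proved for the entire class of integrable, regular, quasi-unipotent systems defined over $\Q$, with a bound $s^{2^{\poly(m,d,\ell)}}$ depending only on these discrete data (Theorem~\ref{thm:bny-qs}), and degenerate Hamiltonians are absorbed by semicontinuity of $\cN(\cdot)$ rather than by analyzing collisions in a fixed pencil. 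As this paper itself remarks, definedness over $\Q$ of the parametrized connection is precisely the condition responsible for uniform bounds; without it your scheme collapses to the non-uniform statement. Two smaller inaccuracies: quasi-unipotence is used to bound local exponents and hence the variation of argument on small arcs (regularity alone already gives polynomial growth near the punctures), and the double exponential does not arise from accumulating estimates along a single path visiting all singular points, but from an induction over clusters of singularities in the parameter space, each step of which costs an exponential.
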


We call the attention of the reader to the fact that the dependence of
the upper bound \eqref{eq:bny-bound} is \emph{doubly-exponential} in
both $\deg H$ and $\deg\omega$. In contrast, Petrov and Khovanskii
proved the following result in an unpublished work (see \cite{Zol,roitman-msc}
for an exposition).

\begin{Thm}\label{thm:pk}
  \begin{equation}
    \# Z_{H,\omega} \le a(n)m+b(H)
  \end{equation}
  where $n=\deg H,m=\deg\omega$, with $a(n)$ some explicit function
  and $b(H)$ \emph{some} function of $H$ (for which a bound is not
  given).
\end{Thm}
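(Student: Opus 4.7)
The plan is to combine two classical ingredients: the finite generation of Abelian integrals as an $\R[t]$-module with coefficients of controlled degree, and the Petrov argument-principle technique for counting zeros of such combinations.

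First, I would fix a set of $\mu \le (n-1)^2$ basic forms $\omega_1,\ldots,\omega_\mu$ (their number depending only on $n$) such that any polynomial form $\omega$ of degree $m$ admits a decomposition
\begin{equation*}
  I_{H,\omega}(t) = \sum_{i=1}^\mu p_i(t)\, I_{H,\omega_i}(t),
\end{equation*}
where $p_i \in \R[t]$ satisfy $\deg p_i \le C(n)\cdot m$. This reduction is carried out in the Petrov module, by repeatedly subtracting Gelfand--Leray derivatives of polynomial multiples of the basic $\omega_i$ from $\omega$, trading fibre-direction degree for base-variable degree in $t$. In particular the dimension of the module and the linear rate $C(n)$ depend on $H$ only through $n$.

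Next, the basis integrals $I_i := I_{H,\omega_i}$ satisfy a Picard--Fuchs system of dimension $\mu$, whose singular set consists of the atypical values of $H$ and depends only on $H$. After analytic continuation, I would count the real zeros of $I_{H,\omega}$ by the argument principle applied to a contour consisting of a large circle $\abs{t}=R$ together with small arcs around each critical value. On the large circle the variation of $\arg I_{H,\omega}$ is dominated by the contribution from the polynomial factors and from the asymptotics of $I_i$ at infinity, so it scales like $a(n)\cdot m$; this furnishes the $a(n)m$ term in the bound.

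The main obstacle is the local analysis near the critical values of $H$. \emph{A priori} the coefficients $p_i$ can be arranged so that $I_{H,\omega}$ accumulates many zeros near a given singularity of the Picard--Fuchs system. One shows nevertheless, via a Rolle--Khovanskii-type estimate applied to the rank-$\mu$ Fuchsian operator that annihilates every $I_{H,\omega}$, that the number of zeros in a small neighbourhood of each singularity is bounded by a quantity depending on the Jordan structure and characteristic exponents of the local monodromy. These data depend on $H$ in an uncontrolled way, and this is precisely why the resulting $b(H)$ is non-explicit. Summing the local bounds over the $O(n^2)$ critical values and adding the linear-in-$m$ contribution from the large circle yields $\# Z_{H,\omega} \le a(n)m + b(H)$.
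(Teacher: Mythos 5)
Your first step, the Petrov-module decomposition $I_{H,\omega}=\sum_i p_i(t)\, I_{H,\omega_i}(t)$ with $\deg p_i$ linear in $m$ and with the number of basic forms and the rate depending on $H$ only through $n$, is exactly the reduction used here (Corollary~\ref{cor:ai-envelope}), so that part is fine. The gap is in the counting step. A contour consisting of a large circle plus small arcs around the critical values does not bound a simply connected domain, while $I_{H,\omega}$ is ramified over the atypical values of $H$; to apply the argument principle to a single branch one must slit the plane (say along the real axis, as in the keyhole domain of Figure~\ref{fig:keyhole}), and then the boundary necessarily contains the real segments between consecutive singular points. Controlling $\Var\Arg$ along those segments is the heart of the Petrov--Khovanskii argument, and your proposal says nothing about it: the needed ingredient is the Petrov trick, $\Var\Arg F\big|_{\delta_i}\le\pi\(\#\{\Im F=0\}+1\)$ on a real segment, combined with the observation that, the $p_i$ being real on $\R$, the imaginary part is again a combination of the same shape with one fewer transcendental summand, so one inducts on the number of summands; the depth of this induction is bounded by the rank of the Picard--Fuchs system, and this is precisely where the explicit factor $a(n)$ comes from. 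Note also that bounding $\Var\Arg$ of the sum on the large circle by the separate contributions of the polynomial factors and of the asymptotics of the $I_i$ is not legitimate, since variation of argument is not subadditive; in the actual argument one divides by one of the $f_i$ and uses order (growth-exponent) estimates for the quotient.

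Second, your treatment of the singular points fails as stated. There is no fixed rank-$\mu$ Fuchsian operator annihilating every $I_{H,\omega}$: only the basic integrals satisfy the rank-$\mu$ Picard--Fuchs system, while an operator annihilating $\sum_i p_i I_i$ has order growing with $m$. Accordingly, the number of zeros of $I_{H,\omega}$ in a fixed neighbourhood of a critical value is \emph{not} bounded by local monodromy data alone: taking $\omega=p(H)\,\omega_1$ gives $I_{H,\omega}=p(t)I_{H,\omega_1}(t)$, which can have on the order of $m/n$ zeros clustered arbitrarily close to a critical value. So the split ``$a(n)m$ from the circle at infinity, $b(H)$ from neighbourhoods of the singularities'' cannot work. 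In the genuine proof the linear-in-$m$ term is produced by the segment analysis and the induction (plus a term of size $k\sim m/n$ from the arcs near infinity), while the non-explicit $b(H)$ absorbs the local exponents/orders at the singular points and the zero counts of the basic integrals (the base case of the induction); this is the structure made explicit, in the Q-system setting, in Proposition~\ref{prop:real-pk}.
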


The bound given by Theorem~\ref{thm:pk} is not uniform over the class
of Hamiltonians of a given degree, due to the appearance of the term
$B(H)$. However, using the methods developed in the proof of
Theorem~\ref{thm:vk} it is possible to prove that this term is in fact
uniformly bounded \cite{Zol}.

\begin{Thm}\label{thm:pk-z}
  \begin{equation}
    \cN(n,m) \le a(n)m+b(n)
  \end{equation}
  where $n=\deg H,m=\deg\omega$, with $a(n)$ some explicit funcition
  and $b(n)$ \emph{some} function of $n$ (for which a bound is not
  given).
\end{Thm}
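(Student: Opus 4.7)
The plan is to combine Theorem~\ref{thm:pk} with a compactness argument in the spirit of the proof of Theorem~\ref{thm:vk}. Theorem~\ref{thm:pk} provides, for each fixed Hamiltonian $H$ of degree $n$, a bound $\#Z_{H,\omega}\le a(n)m+b(H)$ with $a(n)$ explicit but $b(H)$ depending on $H$ in an uncontrolled manner. The goal is to show that $b(H)$ stays bounded as $H$ ranges over all Hamiltonians of degree at most $n$.

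First I would pass to a compact parameter space. Since $I_{cH+d,\omega}(t)=I_{H,\omega}((t-d)/c)$ for $c\ne0$, the zero count $\#Z_{H,\omega}$ is invariant under affine reparametrizations $H\mapsto cH+d$; one may therefore work on the real projective variety $\cH_n$ obtained by quotienting polynomials of degree $\le n$ by this affine action, which is compact.

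Next I would prove that $b(H)$ is locally bounded on $\cH_n$: for every $H_0\in\cH_n$ there exist a neighborhood $U\ni H_0$ and a constant $C$ such that $\#Z_{H,\omega}\le a(n)m+C$ for all $H\in U$ and all $\omega$ of degree $\le m$. Granting this, the compactness of $\cH_n$ means finitely many such neighborhoods cover it, and the maximum of the corresponding constants serves as $b(n)$. For a generic $H_0$ (a Morse Hamiltonian with distinct critical values and compact ovals) the local bound is a straightforward consequence of the analytic dependence of $I_{H,\omega}$ on $H$ over any compact subset of the complement of the critical value set $\Sigma(H_0)$, combined with the upper semi-continuity of the zero-counting function in an analytic family of one-variable functions.

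The main obstacle, and the place where the deeper techniques of \cite{varchenko:finiteness,asik:finiteness} are really needed, is the local analysis at the degenerate strata of $\cH_n$: Hamiltonians with non-isolated or non-Morse critical points, coalescing critical values, or unbounded level curves. At such $H_0$ the continuous family of ovals $\delta_t$ itself degenerates, so zeros cannot simply be tracked through the perturbation. The remedy is to resolve the parameter space (via an iterated blow-up of the bad locus), study the asymptotic behaviour of $I_{H,\omega}$ as $H\to H_0$ using its Picard-Fuchs system together with the asymptotic expansions of Varchenko, and verify that the number of zeros produced along each exceptional divisor is bounded solely in terms of $n$. This is precisely the step that is carried out by non-effective tools, which is why the resulting $b(n)$ cannot be made explicit by this route.
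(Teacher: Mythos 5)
Your overall route---the linear-in-$m$ Petrov--Khovanskii bound combined with a compactness argument over a projectivized space of Hamiltonians, with the real work happening at the degenerate strata---is the one the paper itself points to: it gives no proof of Theorem~\ref{thm:pk-z}, attributing exactly this combination of Theorem~\ref{thm:pk} with the methods of Theorem~\ref{thm:vk} to \cite{Zol}. There is, however, a genuine gap in your generic-point step. You need a neighborhood $U\ni H_0$ and a constant $C$ with $\# Z_{H,\omega}\le a(n)m+C$ for all $H\in U$, \emph{all} $m$, and all $\omega$ of degree at most $m$; crucially $U$ and $C$ must not depend on $m$, since otherwise the finite subcover only yields a bound of the form $a(n)m+b(n,m)$, i.e.\ nothing beyond Theorem~\ref{thm:vk}. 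The justification you give---analytic dependence of $I_{H,\omega}$ on $H$ over compact subsets of the complement of $\Sigma(H_0)$ plus semicontinuity of the zero count---cannot deliver this: the family of integrals grows with $m$, so any Hurwitz-type constant extracted this way a priori depends on $m$, and the argument says nothing about zeros escaping toward the critical values or to infinity as $H$ varies. (Note also that the semicontinuity recorded in Remark~\ref{rem:semicontinuity} goes the opposite way: a bound valid on a dense set of parameters passes to the limit parameter, not from a point to its neighborhood.)

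The repair is to apply compactness not to the black-box quantity $b(H)$ of Theorem~\ref{thm:pk} but to the $m$-independent ingredients of its proof. Using the decomposition \eqref{eq:form-decomposition} and Corollary~\ref{cor:ai-envelope}, all dependence on $m$ is confined to polynomial coefficients of degree $\lceil m/(n+1)\rceil$ multiplying the fixed finite set of basic integrals $\int_{\delta_t}\omega_\alpha$; the Petrov argument-principle induction then produces a term linear in $\lceil m/(n+1)\rceil$ plus an additive term built from quantities independent of $m$---orders of the basic integrals at singular points, variations of argument along fixed arcs, zero counts of fixed low-degree envelopes---and it is \emph{these} quantities that must be bounded uniformly over the compactified Hamiltonian space, near generic points as well as along the degenerate strata, by the Varchenko--Khovanskii machinery (blow-ups and Varchenko's asymptotic expansions, as you indicate). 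In other words, the compactness argument has to be interleaved with the Petrov--Khovanskii induction rather than appended to its conclusion; with that restructuring your outline matches the argument of \cite{Zol}.
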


In summary, Theorem~\ref{thm:bny} establishes an \emph{explicit} bound
on $\cN(n,m)$ depending \emph{doubly-exponentially} on $m$, whereas
Theorem~\ref{thm:pk-z} establishes an \emph{existential} bound
depending \emph{linearly} on $m$. The goal of this paper is to apply a
combination of the ideas used in the proofs of these two results, to
obtain an \emph{explicit} bound depending \emph{linearly} on $m$.

The result is as follows. We introduce the following notation to
simplify the presentation of the results. We write $O^+(f(x))$ as a
shorhand for $O(f(x)\log f(x))$. We write $\exp(x)$ for
$2^x$, and $\exp^+(x)$ for $\exp(O^+(x))$. Finally we allow
compositional iteration in the usual way, so $\exp^2(x)$ corresponds
to $2^{2^x}$, etc.

\begin{Thm}\label{thm:main-ai}
  \begin{equation}
    \cN(n,m) \le \exp^{+2}(n^2)\cdot m+\exp^{+5}(n^2)
  \end{equation}
\end{Thm}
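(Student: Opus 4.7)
The plan is to combine the Picard-Fuchs framework underlying the proof of Theorem~\ref{thm:bny} with the Petrov-Khovanskii scheme that extracts linear dependence on $\deg\omega$. Since $\cN(n,m)$ is a uniform quantity, every step must be carried out in a manner that depends only on $n=\deg H$ and $m=\deg\omega$ and not on the specific $H$ or $\omega$ chosen.

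The first step is a cohomological reduction. For a fixed $H$ of degree $n$, I would work with a basis $\omega_1,\dots,\omega_\mu$ of the Petrov module of $H$ (with $\mu=(n-1)^2$ the Milnor number) and write any Abelian integral as
\begin{equation}
I_{H,\omega}(t)=\sum_{j=1}^{\mu} P_j(t)\, I_j(t), \qquad I_j(t)=I_{H,\omega_j}(t),
\end{equation}
where the $P_j$ are polynomials with $\deg P_j=O(m)$, and the vector $\mathbf I=(I_1,\dots,I_\mu)$ satisfies a Picard-Fuchs system
\begin{equation}
p(t)\,\mathbf I'(t)=A(t)\,\mathbf I(t)
\end{equation}
whose polynomial coefficients $p$ and $A$ depend only on $H$. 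This is the same reduction used in \cite{inf16}.

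The second step is to import from the proof of Theorem~\ref{thm:bny} the explicit quantitative control over the Picard-Fuchs data: effective bounds on $\deg p$, $\deg A$, the matrix norm of $A$, the location and type of the singular points, and the conformal geometry of an \emph{admissible} complex neighbourhood $U\subset\C$ that contains the entire real oval family. All of these quantities are bounded by $\exp^{+k}(n^2)$ for some small $k$; accumulated carefully, these feed the additive term $\exp^{+5}(n^2)$ in the final bound.

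The third step is the Petrov-Khovanskii counting argument applied to $\sum P_j I_j$ on $U$. Using a complexified Rolle lemma together with the argument principle, the number of complex zeros in $U$ is bounded by the increment of the argument of a suitable derivative along $\partial U$. This increment splits into two additive pieces: one proportional to $\mu\cdot\max_j\deg P_j$, producing the leading term $\exp^{+2}(n^2)\cdot m$; and one depending only on the Picard-Fuchs data (monodromy, slopes, and sectorial structure on $U$), absorbed into $\exp^{+5}(n^2)$. Passing from complex zeros on $U$ to real zeros on the oval family is free given the choice of $U$ in Step 2.

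The main obstacle I anticipate is the clean separation of variables in Step 3. The classical Petrov-Khovanskii scheme invokes the existence of a bound from Theorem~\ref{thm:vk} as a black box; to make it explicit one must perform the variation-of-argument estimate in such a way that the polynomial degrees $\deg P_j$ never appear \emph{multiplied} by the monodromy or singular-point estimates, since any such multiplicative interaction would ruin the linear-in-$m$ structure and revert the overall bound to the doubly-exponential regime of Theorem~\ref{thm:bny}. Achieving this additive rather than multiplicative coupling between the $m$-dependent and $n$-dependent contributions is the technical heart of the argument.
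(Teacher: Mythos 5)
Your outline reproduces the paper's general strategy (a Q-system for the Gauss--Manin connection together with a Petrov--Khovanskii count of a polynomial envelope), but it stops exactly where the real work begins: you name the ``additive rather than multiplicative coupling'' between the $m$-dependent and $n$-dependent contributions as the technical heart and then offer no mechanism for achieving it. In the paper this is supplied by Petrov's trick combined with an induction on the number $r$ of summands in $f=\sum_i p_i f_i$: writing $f=f_1F$, the variation of argument of $F$ along a keyhole contour is controlled on the small circular arcs and at infinity by uniform order bounds (Proposition~\ref{prop:bny-order}), while on the real segments it is bounded by $\pi(\cN(\Im F)+1)$, and the key observation is that $\abs{f_1}^{2}\,\Im F$ is again a polynomial-envelope function with only $r-1$ summands, now attached to the Q-system $(\Omega\otimes\Omega^\ominus)^\ominus$. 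Closure of the class of Q-systems under symmetrization and tensor product (Transformations~\ref{trans:symm} and~\ref{trans:tensor}), together with Theorem~\ref{thm:bny-qs} for the base case $r=1$, is what makes this induction run with explicit uniform constants and keeps $k$ (hence $m$) out of all products. Without an argument of this kind, your Step~3 is a restatement of the goal rather than a proof.

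Two further ingredients are missing. First, Petrov's trick requires all singular points of the system to lie on $\R$, since the reflection $f\mapsto f^\dag$ must preserve the singular locus for $\Im_{\delta_i}(f_i f_1^\dag)$ to belong to the same parametrized family of Q-functions; for an arbitrary Hamiltonian the critical values are complex, and the paper reduces to the real case by $\nu$ rounds of a shift-plus-fold ($w=t^2$) transformation performed \emph{inside} the class of Q-systems. Crucially, the shift must be by a free additional parameter $\mu_j$ rather than by the fixed value $-\Re s_j$, because a fixed irrational shift would destroy definedness over $\Q$ and with it the uniformity; this composite folding construction is also the source of the towers of exponentials in the final bound, so it cannot be waved away as bookkeeping. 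Second, the decomposition in your Step~1 (and Corollary~\ref{cor:ai-envelope}) is valid only for generic $H$; the paper passes from generic to arbitrary Hamiltonians by lower semicontinuity of the counting function (Remark~\ref{rem:semicontinuity}), a step your uniformity discussion does not address.
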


See subsection~\ref{sec:concluding-remarks} for a discussion of the
percise form of the bound and possible improvements.

\section{Preliminaries and setup}

In this section we review background from the theory of analytic
differential equations, the theory of Abelian integrals, and the work
\cite{inf16}.

\subsection{Connections, integrability and regularity}
\label{sec:connections}

Let $\Omega\in\Mat(\ell,\Lambda^1(\C P^m))$ denote an $\ell\times\ell$
rational matrix one-form over $\C P^m$ with singular locus
$\Sigma$. The form is said to be \emph{integrable} if
$\d\Omega=\Omega\wedge\Omega$. This condition is equivalent to the
existence of a fundamental solution matrix $X(\cdot)$, defined over
$\C P^m\setminus\Sigma$ and ramified over $\Sigma$, for the following
system of equations
\begin{equation}\label{eq:connection-equation}
  \d X = \Omega \cdot X.
\end{equation}
In other words, we view $\Omega$ as the matrix form of a connection
defined on the trivial $\ell$-dimensional vector bundle over $\C P^m$
and $X$ as a fundamental matrix of horizontal sections.

Let $(\l_1,\ldots,\l_m)$ denote an affine chart on $\C P^m$, and for
convenience of notation let $t=\l_1,\l'=(\l_2,\ldots,\l_m)$. Then the
system~\eqref{eq:connection-equation} may be viewed as a family of
linear systems of differential equations in the $t$ variable
parameterized by $\l'$,
\begin{equation}\label{eq:family-equation}
  \frac{\d X}{\d t} = \Omega_{\l'}(t) X(t).
\end{equation}
We remark that not every system of the form \eqref{eq:family-equation}
may be obtained in this manner. In particular, systems obtained in
this manner are necessarily \emph{isomonodromic}.

The system~\eqref{eq:connection-equation} is said to be \emph{regular}
if for any germ of a \emph{real analytic} path
$\gamma:(\R,0)\to\C P^m$ with
$\gamma(\R\setminus\{0\})\subseteq\C P^m\setminus\Sigma$, the rate of
growth of the fundamental solution matrix along $\gamma$ is
polynomial. Explicitly, we require that for suitable positive
constants $c,k$ we have
\begin{equation}
  \abs{X(\gamma(s))}^{\pm 1} \le c\abs{s}^{-k} \qquad \forall s\in(\R,0).
\end{equation}
The analyticity of the curve $\gamma$ is required to rule out
spiralling around the singular locus.

\subsection{Monodromy and Quasi-Unipotence}
\label{sec:monodromy}

To each closed loop $\gamma\in\C P^m\setminus\Sigma$ one may associate
a continuation operator $\Delta_\gamma$ describing the result of
analytic continuation of $X(\cdot)$ along $\gamma$. The
\emph{monodromy matrix} $M_\gamma\in\GL(\ell,\C)$ is defined by the
equation $\Delta_\gamma X=X\cdot M_\gamma$. It is clear that
$M_\gamma$ depends only on the pointed homotopy class of $\gamma$, and
that the conjugacy class of $M_\gamma$ depends only on the free
homotopy class of $\gamma$. In the future we shall mainly be
interested in the conjugacy class of the monodromy, and refer to the
monodromy associated with a homotopy class of a closed loop in this
sense.

A matrix $M$ is said to be \emph{quasi-unipotent} if all of its
eigenvalues are roots of unity. Equivalently, $M$ is quasi-unipotent
if and only if there exist $j,k\in\N$ such that $(M^j-I)^k=0$, where
$I$ denotes the identity matrix. We shall say that the monodromy along
a loop $\gamma$ is quasi-unipotent if the associated monodromy matrix
$M_\gamma$ is quasi-unipotent (note that this condition depends only
on the conjugacy class of $M_\gamma$).

A loop $\gamma$ is said to be a \emph{small loop} around $\l_0$ if
there exists a germ of an analytic curve $\tau:(\C,0)\to(\C P^m,\l_0)$
with $\tau(\C\setminus\{0\})\subseteq\C\setminus\Sigma$ such that
$\gamma$ is homotopic to a closed path $\tau(\{\abs{z}=\e)$ for
sufficiently small $\e$. We shall only be interested in the case
$\l_0\in\Sigma$.

The system~\eqref{eq:connection-equation} is said to be
\emph{quasiunipotent} if the monodromy matrix associated to each
\emph{small loop} is quasi-unipotent. Note that this condition does
\emph{not} imply that \emph{every} monodromy matrix associated with
the system is quasi-unipotent. In particular, monodromies along loops
encircling several singualities are often not small, and are not
required to be quasi-unipotent (and this is indeed the case in natural
examples).

\subsection{Complexity of algebraic objects}
\label{sec:complexity}

In this subsection we give definitions for measuring the
complexity of the formulas representing various algebraic objects. It
is rather unusual in mathematics to be concerned with the particular
formulas used for the description of an object. Questions of this form
fall more neatly within the framework of \emph{mathematical
  logic}. Indeed, strictly speaking the definitions in this subsection
could be more accurately expressed in terms of logical formula
complexity. In the interest of simplicity we content ourselves with
simple algebraic approximations of these notions which are sufficient
for our purposes.

We stress that all definitions in this subsection refer to a
particular representation of a given object. For instance, $x^2/x$ and
$x/1$ are viewed as distinct fractional representations of the same
polynomial.

A polynomial $P\in\Z[x_1,\ldots,x_n]$ is said to be a \emph{lattice}
polynomial. We shall say that such a polynomial is \emph{defined over
  $\Q$}, if
\begin{equation}
  P(x_1,\ldots,x_n) = \sum_\alpha c_\alpha x^\alpha \qquad c_\alpha\in\Z
\end{equation}
where $\alpha$ denotes a multiindex. We define the size of $P$ to be
$\size{P}=\sum_\alpha \abs{c_\alpha}$.

A rational function given by a fraction of the form $P/Q$ is said to
be defined over $\Q$ if $P$ and $Q$ are defined over $\Q$. In this
case, we define the size $\size{P/Q}$ to be $\size{P}+\size{Q}$.

Similarly, a one-form $\omega$ is said to be defined over $\Q$ if it
is of the form
\begin{equation}
  \omega = \sum_i R_i(x) \d x_i
\end{equation}
where $R_i$ are rational functions defined over $\Q$. In this case, we
define the size $\size{\omega}$ to be $\sum_i \size{R_i}$.

Finally, say that a vector or a matrix is defined over $\Q$ if its of
its components are, and define the size to be the sum of the sizes of
components.

\subsection{Counting zeros of multivalued vector functions}
\label{sec:multivalued-zeros}

Recall that we may view the system~\eqref{eq:connection-equation} as a
family of differential equations in the variable $t$, of the
form~\eqref{eq:family-equation}. We shall be interested in studying
the oscillatory behavior of the solutions of this equation. However,
due to the fact that the solutions of \eqref{eq:family-equation} may
be ramified, some care is required in measuring this oscillation.

Let $f$ be a (possibly multivalued) function defined in a domain
$U\subseteq\C$. If $U$ is simply connected, then we define the
following \emph{counting function} as a measure for the number of
zeros of $f$:
\begin{equation}
  \cN_U(f) = \sup_{b} \#\{t:b(t)=0\},
\end{equation}
where $b$ varies over the branches of $f$ in $U$ (which are well
defined univalued functions, since $U$\ is simply connected).

For general domains, we use the following counting function,
\begin{equation}
  \cN_U(f) = \sup_{T\subseteq U} \cN_T(f),
\end{equation}
where $T$ varies over all triangular domains (i.e., domains whose
boundary consists of straight line segment). The restriction on the
geometry of $T$ is needed in order to avoid spiralling around a
singular point. We stress that the closure of $T$ \emph{need not} be
contained in $U$. The boundary may contain singular points. When
$U$ is omitted from the notation, it is understood to be the
domain of analyticity of the function $f$.

Let $L$ be a linear space of (possibly multivalued) functions defined
in a domain $U\subseteq\C$. As a measure for the number of zeros of an
element of $L$, we use the following,
\begin{equation}
  \cN_U(L) = \sup_{f\in L} \cN_U(f).
\end{equation}
When $U$ is omitted from the notation, it is understood to be the
common domain of analyticity of the elements of $L$.

\begin{Rem}[Semicontinuity]\label{rem:semicontinuity}
  As remarked in \cite{inf16}, the counting function $\cN(\cdot)$ is lower
  semicontinuous with respect to the space $L$. In particular, if we
  have a family of spaces $L_\nu$ continuously depending on a paramter
  $\nu$, then an upper bound $\cN(L_\nu)<M$ for $\nu$ in a dense
  subset of the paramter space implies the same upper bound for every
  $\nu$.
\end{Rem}

We now consider the oscillations of vector-valued solutions of the
system~\eqref{eq:family-equation}. Fix $\l'$ such that the affine line
$A=\C P^1\times\{\l'\}$ is not contained in $\Sigma$. Then $A$
intersects $\Sigma$ in finitely many points. Let $U$ denote the
complement of this intsection.

Since the system~\eqref{eq:family-equation} is non-singular in $U$, it
admits an $\ell$-dimensional space $L_{\l'}^V(\Omega)$ of (possibly
multivalued) vector-valued solution functions. To measure the
oscillation of these solutions, we shall consider the number of
intersections of a solution with an arbitrary fixed linear
hyperplane. Formally, we define the linear space
\begin{equation}\label{eq:solution-space}
  L_{\l'}(\Omega) = \{ c \cdot f : c\in\C^\ell, f\in L_{\l'}^V \}
\end{equation}
and the corresponding counting function
\begin{equation}
  \cN(\Omega) = \sup_{\l'} \cN(L_{\l'}).
\end{equation}
When the system $\Omega$ is clear from the context, we sometimes omit
it from the notation and write $L_\l'$.

We note that the counting function may in general be infinite. We also
remark that by triangulation, one may use to counting function
$\cN(\cdot)$ to study the oscillation in more complicated domains.

\subsection{Q-systems and Q-functions}
\label{sec:q-systems}

In this subsection we introduce a class of systems of the
form~\eqref{eq:connection-equation} for which explicit bounds on the
counting function $\cN(\Omega)$ may be derived. This class constitutes
the main object of study of the paper \cite{inf16}.

\begin{Def}[Q-System]
  The system~\eqref{eq:connection-equation} is said to be an
  $(s,m,d,\ell$)-\emph{Q-system} if $\Omega$ is an $\ell\times\ell$
  matrix one-form defined over $\C P^m$ such that the following holds:
  \begin{enumerate}
  \item $\Omega$ is integrable.
  \item $\Omega$ is regular.
  \item $\Omega$ is quasi-unipotent.
  \item $\Omega$ is defined over $\Q$, has size $s$, and coefficients
    of degree bounded by $d$.
  \end{enumerate}
  Functions from the corresponding linear spaces $L_{\l'}(\Omega)$ are
  said to be \emph{Q-functions}.
\end{Def}

The main interest in this class of systems stems from the following
result of \cite[Theoren 8]{inf16}, which plays the central role in the proof of
Theorem~\ref{thm:bny}.

\begin{Thm}\label{thm:bny-qs}
  Let $\Omega$ be an $(s,m,d,\ell)$-Q-system. Then we have the
  following explicit bound,
  \begin{equation}
    \cN(\Omega) \le s^{2^{\poly(m,d,\ell)}}
  \end{equation}
  where $\poly(m,d,\ell)\le O^+(d\ell^4 m)^5$.
\end{Thm}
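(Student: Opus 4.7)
The plan is to implement a quantitative version of the Khovanskii--Yakovenko growth-vs-zeros program, exploiting the Q-system structure to make every estimate explicit in the size $s$.

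First, I would reduce the counting problem to bounding, on a well-chosen covering of the affine line $A=\C P^1\times\{\l'\}$, the classical growth ratio $\sup_D|f|/|f(t_0)|$ for each disk in the cover; a Jensen-type inequality then bounds the number of zeros of $f\in L_{\l'}(\Omega)$ in a compactly nested disk $D'\Subset D$ by $C\cdot\log(\sup_D|f|/|f(t_0)|)$. The triangular domains appearing in the definition of $\cN_U$ can in turn be covered by $d^{O(m)}$ such disk pairs, using B\'ezout to bound the size of the singular locus of $\Omega$ restricted to $A$ and an explicit combinatorial construction of the cover avoiding these singular points.

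Next, I would bound the numerator. Away from $\Sigma$, Gr\"onwall's inequality applied to \eqref{eq:family-equation} gives $|f|\le\exp(\poly(s,d))$ along bounded paths. Near a singular point $t_*$, regularity gives $|f|\le c\abs{t-t_*}^{-k}$, and quasi-unipotence, combined with the Levelt decomposition of the fundamental matrix and the observation that characteristic polynomials of small-loop monodromies have heights controlled by $s$, forces $k$ to be rational with explicitly controlled denominator. Together these give a bound for $\sup_D|f|$ that is polynomial in $s$ with exponent controlled by $d,\ell,m$.

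The heart of the matter, and the expected main obstacle, is the lower bound $|f(t_0)|\gtrsim s^{-2^{\poly(m,d,\ell)}}$ at a well-chosen rational base point $t_0$. Existential finiteness (Theorem~\ref{thm:vk}) gives no such lower bound; instead one interprets $f(t_0)=c\cdot X(t_0)v$ as a value of a determination of the Q-function $X$ at a $\Q$-rational point, and applies effective transcendence-type bounds to conclude that either $f\equiv 0$ or $\abs{f(t_0)}$ cannot be too small. The doubly-exponential factor $2^{\poly(m,d,\ell)}$ with $\poly\le O^+(d\ell^4m)^5$ emerges here from the combinatorial count of monomials propagated through the entries of the fundamental matrix. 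Finally, uniformity in $\l'$ is obtained by a recursion on $m$: Remark~\ref{rem:semicontinuity} reduces the bound to a dense set of parameters, and restriction of $\Omega$ to a generic hyperplane in the parameter space produces an $(s,m-1,d,\ell)$-Q-system, so that the claimed form of $\poly$ is compatible with the inductive step.
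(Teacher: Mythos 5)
Note first that the paper you are working from does not prove this statement at all: Theorem~\ref{thm:bny-qs} is quoted verbatim from \cite[Theorem 8]{inf16}, so your proposal has to be measured against the proof given there. The decisive gap in your sketch is the third step, the lower bound $\abs{f(t_0)}\gtrsim s^{-2^{\poly(m,d,\ell)}}$ at a rational base point via ``effective transcendence-type bounds''. No such bounds exist for values of solutions of general regular quasi-unipotent systems defined over $\Q$ (this is far beyond what is known even for periods), and, more fundamentally, the statement you need is not even well posed in the required uniformity: the functions to be counted are $f=c\cdot(Xv)$ with $c\in\C^\ell$ and the parameter $\l'$ ranging over all complex values, so $f(t_0)$ involves arbitrary transcendental continuous data and there is no rationality to exploit; moreover $f(t_0)$ can vanish, or be arbitrarily small, for a nonzero $f$ (take $c$ nearly orthogonal to $X(t_0)v$), and the counting function is invariant under $f\mapsto\varepsilon f$, so no scale-dependent pointwise lower bound can legitimately enter the argument. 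Choosing $t_0$ adapted to $f$ to escape this destroys exactly the rational-point structure any arithmetic lower bound would require. This is precisely why the actual proof works only with scale-invariant quantities.

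The proof in \cite{inf16} instead counts zeros by the argument principle, run through an induction on clusters of singular points. The variation of argument along small arcs near $\Sigma$ is controlled by quasi-unipotence (small-loop monodromy eigenvalues are roots of unity) together with explicit norm bounds on the residue and connection matrices; this is where the hypothesis that $\Omega$ is defined over $\Q$ with size $s$ enters, through heights of the coefficients and effective elimination describing the singular locus, not through transcendence of values. Along the remaining boundary pieces one uses growth estimates in a scale-invariant (``conformal slope'') form, which is the correct replacement for your Jensen-plus-pointwise-lower-bound scheme. Parts of your sketch are compatible in spirit with this (the Gr\"onwall and regularity upper bounds, the B\'ezout control of $\Sigma\cap A$, and the semicontinuity remark for genericity in $\l'$), but note also that Jensen-type counting in compactly nested disks cannot reach zeros in triangles whose closures touch $\Sigma$, which the counting function $\cN$ explicitly allows; the cluster induction with the argument principle is what handles those. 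As it stands, the key step of your proposal has no available proof, so the scheme does not yield the theorem.
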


We will also require a result concerning the order of a Q-function
near a singular point. Fix $\l'$ and let $f(t)\in L_{\l'}$ and
$(t_0,\l')\in\Sigma$ a singular point of $\Omega$. Then, since
$\Omega$ is regular and quasiunipotent, $f(t)$ admit an expansion
\begin{equation}
  f(t) = p(\ln(t-t_0)) t^\mu + o(t^\mu) \qquad p\in\C[v], \mu\in\R.
\end{equation}
We call $\mu$ the \emph{order} of $f$ at $t=t_0$, and denote
$\mu=\ord_{t_0} f$. If $\gamma_\e$ denotes a circular arc of
radius $\e$ and angle $\alpha$ around $t_0$, then
\begin{equation}\label{eq:order-lim}
  \lim_{\e\to0} \Var\Arg f(t)\big|_{\gamma_\e} = 2\alpha\mu.
\end{equation}

The following proposition follows in a straightforward manner from the proof of
Theorem~\ref{thm:bny-qs}.

\begin{Prop}\label{prop:bny-order}
  Let $\Omega$ be an $(s,m,d,\ell)$-Q-system. Fix some $(t_0,\l')\in\Sigma$ and let
  $f\in L_{\l'}(\Omega)$. Then we have the following explicit bound,
  \begin{equation}
    \abs{\ord_{t_0} f} \le s^{(d\ell)^{O(m)}}.
  \end{equation}
\end{Prop}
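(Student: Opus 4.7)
The plan is to bound $|\mu|$ with $\mu = \ord_{t_0} f$ by analyzing the local structure of the Q-system at the Fuchsian singular point $(t_0, \l')$, combining the Levelt decomposition of the fundamental matrix with the quasi-unipotence hypothesis and with effective height estimates on the singular points of $\Omega$.

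I would start locally near $(t_0, \l')$: since $\Omega$ is regular, the restriction of \eqref{eq:family-equation} to the affine line $\C P^1 \times \{\l'\}$ has a regular singularity at $t_0$, and its fundamental matrix admits a Levelt normal form
\[
X(t) = P(t) \cdot (t-t_0)^A,
\]
where $P$ is a meromorphic germ with $P(t_0)$ invertible and $A$ is a constant $\ell \times \ell$ matrix whose eigenvalues modulo $\Z$ determine the monodromy eigenvalues via exponentiation. Quasi-unipotence forces each eigenvalue of $A$ to be rational, and the order $\mu$ of any Q-function $f \in L_{\l'}$ then decomposes as $\mu = \lambda + k$, where $\lambda$ is an eigenvalue of $A$ and $k$ is the vanishing order at $t_0$ of the leading coefficient from $P$ picked out by the projection $f = c^\top X v$.

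The next step is to bound $|\lambda|$ and $k$ separately. For $|\lambda|$, the matrix $A$ is, up to an integer shift, the residue of $\Omega_{\l'}$ at $t_0$; its norm is controlled by the values of $\Omega$ and of the relevant denominator derivatives at this point. Since $\Omega$ is defined over $\Q$ with size $s$, degree $d$, and $m$ parameters, the singular points $t_0$ on the line $\C P^1 \times \{\l'\}$ are algebraic over $\Q(\l')$ with height at most $s^{\poly(d,m)}$; the residue then has coefficient heights bounded by $s^{(d\ell)^{O(m)}}$, and the same bound transfers to the roots of its characteristic polynomial. For $k$, the Levelt reduction procedure replaces $A$ by $A - D$ for an integer diagonal $D$ at each step and terminates in a number of iterations bounded by the spread of the eigenvalues of $A$ modulo $\Z$; by the bound on $|\lambda|$ just obtained, this spread is again at most $s^{(d\ell)^{O(m)}}$, giving the stated overall estimate.

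The chief obstacle will be producing the effective height estimates for the singular points in the second step, and then tracking how these heights propagate through the Levelt reduction without picking up an additional tower of exponentials. Fortunately the hard work is essentially done already in the proof of Theorem~\ref{thm:bny-qs} in \cite{inf16}, which is why the proposition can be said to follow in a straightforward manner: the same residue bounds needed to reduce the system to the canonical form used there directly control the local exponents appearing in the Levelt decomposition, and hence $|\mu|$.
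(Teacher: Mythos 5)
Your route is genuinely different from the paper's, and it contains a gap at the point where you bound the integer contribution $k$. The Levelt form $X(t)=P(t)(t-t_0)^A$ does control the exponents coming from $A$, but the order of a specific scalar combination $f=c^\top Xv$ is \emph{not} an eigenvalue of $A$ plus an integer controlled by the Levelt reduction: the vanishing order of the holomorphic factor $c^\top P(t)$ against the direction selected by $v$ can be arbitrarily large even when the residue eigenvalues and the Levelt shifts are trivial. For instance, the germ with fundamental matrix and connection form
\[
X(t)=\begin{pmatrix}1&0\\ t^N&t\end{pmatrix},\qquad
\Omega=\begin{pmatrix}0&0\\ (N-1)t^{N-1}&1/t\end{pmatrix}\d t,
\]
is Fuchsian at $t_0=0$ with residue eigenvalues $0,1$, trivial (hence quasi-unipotent) monodromy, and Levelt factor $P(t)=\begin{pmatrix}1&0\\ t^N&1\end{pmatrix}$ invertible at $0$; yet $t^N=e_2^\top Xe_1$ lies in the corresponding space of scalar combinations and has order $N$. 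So no bound on $\abs{\ord_{t_0}f}$ can be extracted from heights of singular points, residue eigenvalues, and the number of Levelt reduction steps alone: the size $s$ and degree $d$ must enter in an essential quantitative way (in the example they grow with $N$, consistently with the statement), and producing that dependence is precisely the hard content. Your step bounding $k$ by ``the spread of the eigenvalues of $A$ modulo $\Z$'' bounds the normalizing integer shifts of the Levelt form, not the multiplicity of tangency between the solution and the hyperplane $c$, which is the quantity actually needed. (The height discussion also needs care: $\l'$ is an arbitrary complex parameter, so the singular points are in general not algebraic over $\Q$, and the residue norm need not stay bounded as singularities collide; uniformity over $\l'$ is again part of the hard content of \cite{inf16}.)

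The paper sidesteps this issue entirely: by \eqref{eq:order-lim} the order is the limit of the normalized variation of argument of $f$ along shrinking circular arcs around $t_0$, and the explicit estimates of \cite{inf16} (Principal Lemma 33 combined with Lemma 42) bound this variation of argument, in absolute value, by $s^{(d\ell)^{O(m)}}$ for any Q-function; the proposition follows at once, in both signs. If you wish to keep the local-analysis framework, the missing ingredient is exactly such a bound on the order of vanishing (equivalently, on $\Var\Arg f$ along small arcs), and it cannot be recovered from the Levelt data; at that point you would have to re-import the quantitative machinery of \cite{inf16}, which is what the paper's short proof does by direct citation.
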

\begin{proof}
  By~\eqref{eq:order-lim} it suffices to estimate the variation of
  argument of $f(t)$ along $\gamma_\e$ (in absolute value). We list the appropriate
  references to \cite{inf16}. The
  estimate follows immediately from Principal Lemma 33 and Lemma 42,
  noting the the normalized length of $\gamma_\e$ approaches $2\pi$ as
  $\e\to0$. We remark that the bound of Lemma 42 is stated for the
  variation of argument of $f$, but it in fact applies to the absolute
  value of the variation of argument as well (as is easily seen from
  the proof).
\end{proof}

\subsection{Abelian integrals and the Gauss--Manin connection}
\label{sec:Abelian-integrals}

In order to apply the theory of Q-systems, and in particular
Theorem~\ref{thm:bny-qs} to the study of Abelian integrals, it is
necessary to produce a Q-system that they satisfy. The existence of
such systems goes back to Picard--Fuchs (in the
form~\eqref{eq:family-equation}), and to Gauss--Manin (in the
form~\eqref{eq:connection-equation}). Explicit derivations of this
system (in the sense of subsection~\ref{sec:complexity}) were given in
\cite{redundant,bdd-decomposition-pf}. For the convenience of the reader,
we reproduce the relevant parts of the construction below. For proofs
of all statements and further details see \cite{inf16}.

Let $\cH_{n+1}$ denote the class of all Hamiltonians of degree $n+1$,
\begin{equation}
  H_\l(x_1,x_2) = \sum_{\abs{\alpha}\le n+1} \l_\alpha x^\alpha
\end{equation}
where $\alpha$ is a 2-multiindex. Then $\l\in\C^m$ with
$m=\frac{1}{2}(n+2)(n+3)$ provides an affine chart for
$\cH_{n+1}$. Let $\Gamma_\l$ denote the affine curve defined by the
equation $H_\l=0$.

For generic $\l$, the rank of the first homology group
$H_1(\Gamma_\l,\Z)$ is $\ell=n^2$. One may choose a set of generators
for this group over a fixed generic fibre $\l=\l_0$, and transport
them horizontally with respect to the Gauss--Manin connection to
obtain sections $\delta_1(\l),\ldots,\delta_\ell(\l)$, ramified over a
singular set $\Sigma^*\subset\cH_n$. Under a further genericity
assumption $\l\not\in\Sigma\supset\Sigma^*$, we may assume further that the first cohomology group
$H^1(\Gamma_\l,\C)$ is generated by the monomial one-forms
\begin{equation}
  \omega_\alpha = x_1 \cdot x^\alpha \d x_2 \qquad 
  0\le \alpha_1,\alpha_2 \le n-1.
\end{equation}
\begin{Def}
  The \emph{period} matrix $X(\l)$ is the $\ell\times\ell$ matrix
  \begin{equation}
    X(\l) = \begin{pmatrix}
      \int_{\delta_1(\l)} \omega_1 & \cdots & \int_{\delta_\ell(\l)} \omega_1 \\
      \vdots &                \ddots &          \vdots                 \\
      \int_{\delta_1(\l)} \omega_\ell & \cdots & \int_{\delta_\ell(\l)} \omega_\ell
    \end{pmatrix}
  \end{equation}
  defined on $\cH_{n+1}\setminus\Sigma$ and ramified over $\Sigma$.
\end{Def}

The period matrix satisfies a system of differential equations known
as the Picard--Fuchs system (or Gauss--Manin connection). The
following resut shows that this system is in fact a Q-system.

\begin{Thm}\label{thm:bny-gauss-manin}
  The period matrix satisfies the equation $\d X = \Omega X$, where
  $\Omega$ is an $(s,m,d,\ell)$-Q-system with
  \begin{equation}
    s \le 2^{\poly(n)}, \quad m \le O(n^2), \quad d \le O(n^2), \quad \ell=n^2.
  \end{equation}
\end{Thm}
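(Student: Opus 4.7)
The plan is to derive the Picard--Fuchs/Gauss--Manin system explicitly for the basis $\{\omega_\alpha\}$, and then verify each of the four defining properties of a Q-system in turn, keeping careful track of the complexity parameters $(s,m,d,\ell)$.

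First I would construct $\Omega$ explicitly. The parameter space has dimension $m = \tfrac12(n+2)(n+3) = O(n^2)$ and the homological rank is $\ell = n^2$, giving two of the bounds immediately. To obtain the matrix one-form itself, I differentiate an entry $\int_{\delta_j(\lambda)} \omega_i$ with respect to a parameter $\lambda_k$. By Leibniz and the residue/Gelfand--Leray formula, the derivative is the period of a new rational two-form whose polar part depends on $\partial H_\lambda/\partial\lambda_k$ and on the gradient of $H_\lambda$. Reducing this class in $H^1(\Gamma_\lambda,\C)$ modulo exact forms along the basis $\{\omega_\alpha\}$ produces the coefficient $\Omega_{ki,\cdot}$. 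The explicit reduction algorithm (via the de Rham--Koszul complex, or the explicit algorithm in \cite{redundant,bdd-decomposition-pf}) produces coefficients that are rational functions in $\lambda$ of degree $O(n^2)$ in the numerator and denominator, with integer coefficients bounded by $2^{\poly(n)}$; this yields $d \le O(n^2)$ and $s \le 2^{\poly(n)}$.

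Next I would verify the four Q-system axioms. Integrability of $\Omega$ is automatic: by construction $X(\lambda)$ is an honest (multivalued) horizontal section, so $d\Omega = \Omega\wedge\Omega$ follows from $d^2 X = 0$. Regularity is classical in this setting --- the period map of an algebraic family has polynomial growth at the singular locus $\Sigma$ in any real-analytic sector, which is a consequence of the algebraic nature of $X(\lambda)$ and the standard estimates for Abelian integrals near a critical value (a degenerating family of affine curves). Quasi-unipotence of the small-loop monodromies is the Monodromy Theorem of Landman--Brieskorn--Clemens applied to the degenerations at points of $\Sigma$: monodromy around any irreducible component of the discriminant acts quasi-unipotently on $H_1$, hence on the period matrix. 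Finally, the coefficients of $\Omega$ are defined over $\Q$ because the reduction modulo exact forms is entirely algebraic in $H_\lambda$ and the fixed integer basis $\omega_\alpha$; the size bound $s \le 2^{\poly(n)}$ is read off from the algorithm.

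The main obstacle is the bookkeeping for the size $s$. The reduction modulo exact forms involves inverting a matrix whose entries are polynomial in the $\lambda_\alpha$ (essentially a Bezout-type resultant expressing $1$ in terms of $\partial_1 H_\lambda, \partial_2 H_\lambda$ modulo $H_\lambda$). Each inversion step multiplies sizes, and iterating produces possible doubly-exponential growth if done naively; one must use the explicit polynomial division scheme of \cite{redundant}, which gives polynomial growth of the degree and single-exponential growth of the height at each step. Since only $O(n^2)$ reduction steps are needed (the filtration by pole order has length bounded by $O(n)$, and there are $O(n^2)$ basis forms), one obtains $s \le 2^{\poly(n)}$ as claimed. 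The remaining axioms are qualitative and require no further quantitative estimate.
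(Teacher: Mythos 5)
Your outline is essentially the same argument as the one the paper relies on: the paper itself gives no proof of this theorem, quoting it from \cite{inf16} and the explicit derivations of the Picard--Fuchs system in \cite{redundant,bdd-decomposition-pf}, and your sketch (Gelfand--Leray differentiation of the periods, reduction modulo exact forms in the Brieskorn/Petrov module with the explicit division scheme controlling degrees and integer heights, integrability from $\d^2X=0$ once $\det X\not\equiv0$, regularity and quasi-unipotence from the classical properties of the Gauss--Manin connection and the geometric monodromy theorem, $\Q$-definability from the algebraic nature of the reduction) is precisely that route. The only caveat is that the quantitative heart --- the single-exponential bound on $s$ --- is delegated to the cited bounded-decomposition algorithm rather than re-derived, which is exactly what the paper does as well.
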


\subsection{Polynomial envelopes}

Let $L$ be the linear space spanned by $r$ (possibly multivalued)
functions $f_1(t),\ldots,f_r(t)$ defined on a domain
$U\subset\C$. Denote by $\cP^k$ the space of polynomials of degree at
most $k$. By a slight abuse of notation, we also denote by $\cP^k$ a
$(k,1,1,k)$-Q-system such that the entries of its fundamental solution
matrix span the space $\cP^k$ (such a system may easily be
constructed).
\begin{Def}
  The \emph{polynomial envelope} of degree $k$ of the space $L$ is
  defined to be
  \begin{equation}
    \cP^k\otimes L = \left\{ \sum_{i=1}^r p_i(t) f_i(t)  \right\},
    \qquad p_i\in\C[t], \deg p_i\le k.
  \end{equation}
  Similarly, the \emph{polynomial envelope} of a Q-system $\Omega$ is
  defined to be $\cP^k\otimes\Omega$ (the tensor product of Q-systems
  is discussed in section~\ref{sec:transformations},
  Transformation~\ref{trans:tensor}).
\end{Def}

To establish a link between the polynomial envelope and the study of
Abelian integrals we require the following result
\cite{gavrilov:petrov-modules,sergeibook}. We use the
notation of subsection~\ref{sec:Abelian-integrals}.

\begin{Prop}
  For a generic Hamiltonian $H_\l$ and for every polynomial one-form
  $\omega$ there exist univariate polynomials $p_\alpha\in\C[t]$ and
  bivariate polynomials $u,v\in\C[x_1,x_2]$ such that
  \begin{equation}\label{eq:form-decomposition}
    \omega = \sum_\alpha (p_\alpha\circ H_\l)\cdot\omega_\alpha+u\d H_\l+\d v,
    \qquad 0\le\alpha_{1,2}\le n-1,
  \end{equation}
  where
  \begin{equation}
    \left\{ 
      \begin{array}{c}
        (n+1)\deg p_\alpha +\deg\omega_\alpha \\
        \deg v \\
        n+\deg u \\
      \end{array} \right. \le \deg\omega
  \end{equation}
\end{Prop}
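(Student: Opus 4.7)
The statement is the classical Gavrilov--Petrov module decomposition. The plan is to prove it by induction on $\deg\omega$, matching top homogeneous parts against a graded version of the decomposition. Throughout, view the quotient $V = \Omega^1_{\C[x_1,x_2]}/(\d\C[x_1,x_2] + \d H_\l\wedge\C[x_1,x_2])$ as a $\C[t]$-module with $t$ acting by multiplication by $H_\l$; give the $x_i$ weight $1$ and $t$ weight $n+1$, so the target decomposition is to exhibit $\{\omega_\alpha\}$ as a free $\C[t]$-basis of $V$ that respects these gradings.

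First I would establish the graded version: replace $H_\l$ by its top-degree homogeneous part $H^{(n+1)}$. For $\l$ in a Zariski-open subset, $H^{(n+1)}$ has an isolated critical point at the origin, so $(\partial_1 H^{(n+1)}, \partial_2 H^{(n+1)})$ is a regular sequence and the Milnor algebra $\C[x_1,x_2]/J(H^{(n+1)})$ has dimension $n^2$ with monomial basis $\{x^\alpha : 0\le\alpha_{1,2}\le n-1\}$. Combined with the integration-by-parts identity $A\,\d x_1 = -x_1\,\d A + \d(x_1 A)$, used to swap $\d x_1$-components for $\d x_2$-components modulo exact forms, this yields the graded decomposition: every homogeneous polynomial $1$-form of weight $m$ admits a representation $\sum c_\alpha (H^{(n+1)})^{k_\alpha}\omega_\alpha + u_0\,\d H^{(n+1)} + \d v_0$ with all summands of pure weight $m$.

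For the inductive step, given $\omega$ of degree $m$, isolate its top-weight homogeneous part $\omega^\circ$, apply the graded decomposition to it, and lift to the candidate $\omega' = \sum c_\alpha H_\l^{k_\alpha}\omega_\alpha + u_0\,\d H_\l + \d v_0$. The lower-order corrections produced by substituting $H_\l$ for $H^{(n+1)}$ have strictly smaller weight, so $\omega-\omega'$ has polynomial degree $<m$, and the inductive hypothesis applies. The degree bounds $(n+1)\deg p_\alpha+\deg\omega_\alpha$, $\deg v$, and $n+\deg u$, each $\le\deg\omega$, all drop out of weight matching in this construction. The main obstacle is the genericity hypothesis --- the argument requires that $H^{(n+1)}$ have an isolated critical point at the origin, a Zariski-open condition on $\l$ which is in fact subsumed by the generic-Hamiltonian condition already in force in subsection~\ref{sec:Abelian-integrals}; given this, the remaining work is routine bookkeeping of the weights.
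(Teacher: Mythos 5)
The paper itself does not prove this proposition: it is imported verbatim from \cite{gavrilov:petrov-modules,sergeibook}, so your outline has to be measured against the standard argument in those references --- and that is essentially the route you take (Petrov module as a $\C[t]$-module, graded case for the principal part $H^{(n+1)}$ via the Milnor algebra, then induction on degree to pass from $H^{(n+1)}$ to $H_\l$, with the bounds in \eqref{eq:form-decomposition} read off from weight matching). Your inductive step and the degree bookkeeping are sound, and the genericity point is harmless since the statement is only claimed for generic $H_\l$.

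The one place where your sketch is genuinely too quick is the graded decomposition itself. The regular-sequence fact plus integration by parts only lets you rewrite the class of $\d\omega$ modulo the Jacobian ideal in terms of the $\d\omega_\alpha$; since the Milnor algebra of $H^{(n+1)}$ is concentrated in degrees $\le 2n-2$, for every weight $m>2n$ the $2$-form $\d\omega$ lies entirely in the ideal $(\partial_1 H^{(n+1)},\partial_2 H^{(n+1)})$, and the two ingredients you name produce no powers of $H^{(n+1)}$ at all --- yet those powers are exactly what the polynomials $p_\alpha$ are made of. The missing device is the Euler/Cartan homotopy formula: for $\omega$ homogeneous of weight $m$ one writes $m\,\omega=\d(i_E\omega)+i_E\,\d\omega$ with $E$ the Euler field, decomposes $\d\omega=\sum_\alpha c_\alpha x^\alpha\,\d x_1\wedge\d x_2+\d H^{(n+1)}\wedge\eta$ by division in the Milnor algebra, and uses $i_E\bigl(\d H^{(n+1)}\wedge\eta\bigr)=(n+1)H^{(n+1)}\eta-(i_E\eta)\,\d H^{(n+1)}$ to extract one factor of $H^{(n+1)}$ and recurse on $\eta$, whose weight has dropped by $n+1$; the identity $H^{(n+1)}\d v=\d\bigl(H^{(n+1)}v\bigr)-v\,\d H^{(n+1)}$ keeps this inner recursion inside the module of relatively exact forms. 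With that inner induction supplied, your plan coincides with the proof in the cited sources and the weight matching does yield the stated bounds.
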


Let $L_\l^e$ denote the linear space of Abelian integrals of forms of
degree at most $e$ over the Hamiltonian $H_\l$, and let $L_\l^B$
denote the linear space of Abelian integrals of the basic forms
$\omega_\alpha$.

Consider now an arbitrary polynomial one-form $\omega$ of degree at
most $e$. Let $\delta\in H_1(\{H_\l=s\},\Z)$ be a cycle on the
$s$-level surface of $H_\l$. Then $H_\l\big|_\delta\equiv s$ and
$\d H_\l \big|_\delta
\equiv0$. Integrating~\eqref{eq:form-decomposition} over $\delta$,
\begin{equation}
  \int_\delta \omega = \sum_\alpha p_\alpha(s) \int_\delta w_\alpha,
  \qquad \deg p_\alpha \le \lceil e/(n+1) \rceil.
\end{equation}
\begin{Cor}\label{cor:ai-envelope}
  For a generic Hamiltonian $H_\l$,
  \begin{equation}
    L_\l^e \subseteq \cP^{\lceil e/(n+1) \rceil}\otimes L_\l^B.
  \end{equation}
\end{Cor}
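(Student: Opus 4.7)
\smallskip

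The plan is to observe that the computation immediately preceding the corollary statement is essentially already the proof; what remains is to package it as a containment of linear spaces and to check the degree bound.

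First, I would fix a generic $\l$ (so that the decomposition in the preceding Proposition is available, the basic forms $\omega_\alpha$ generate $H^1(\G_\l,\C)$, and the cycles $\delta_j(\l)$ are well defined), and pick an arbitrary element $I\in L_\l^e$. By definition, $I(s)=\int_{\delta(s)}\omega$ for some polynomial one-form $\omega$ with $\deg\omega\le e$ and some horizontal section $\delta(s)\in H_1(\{H_\l=s\},\Z)$. Apply the preceding Proposition to $\omega$ to obtain univariate polynomials $p_\alpha$ and bivariate polynomials $u,v$ with
\begin{equation*}
  \omega=\sum_\alpha (p_\alpha\circ H_\l)\,\omega_\alpha+u\,\d H_\l+\d v.
\end{equation*}

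Next, I would integrate over the cycle $\delta(s)\subseteq\{H_\l=s\}$. The exact term $\d v$ contributes nothing, and since $H_\l\equiv s$ on $\delta(s)$, we have $\d H_\l\big|_{\delta(s)}\equiv0$, so the term $u\,\d H_\l$ also integrates to zero. Since $p_\alpha\circ H_\l\equiv p_\alpha(s)$ on $\delta(s)$, the scalar factor pulls out of the integral, yielding
\begin{equation*}
  I(s)=\int_{\delta(s)}\omega=\sum_\alpha p_\alpha(s)\int_{\delta(s)}\omega_\alpha
       =\sum_\alpha p_\alpha(s)\, I_\alpha(s),
\end{equation*}
where each $I_\alpha(s):=\int_{\delta(s)}\omega_\alpha$ lies in $L_\l^B$ by definition.

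To finish, I would read off the degree bound. From the inequality $(n+1)\deg p_\alpha+\deg\omega_\alpha\le\deg\omega\le e$ in the Proposition, together with $\deg\omega_\alpha\ge0$, we obtain $\deg p_\alpha\le e/(n+1)$, hence $\deg p_\alpha\le\lceil e/(n+1)\rceil$. Therefore $I$ is expressible as a sum $\sum_\alpha p_\alpha(s) I_\alpha(s)$ with $p_\alpha\in\cP^{\lceil e/(n+1)\rceil}$ and $I_\alpha\in L_\l^B$, which is precisely the defining form of an element of $\cP^{\lceil e/(n+1)\rceil}\otimes L_\l^B$. Since $I\in L_\l^e$ was arbitrary, this proves the inclusion.

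There is no real obstacle: the only things to be careful about are the vanishing of the two ``non-basic'' terms after integration (which is automatic because $\delta(s)$ is a cycle lying in a level set of $H_\l$) and the arithmetic giving $\lceil e/(n+1)\rceil$; both are direct.
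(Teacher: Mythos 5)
Your proposal is correct and follows essentially the same route as the paper: the paper proves the corollary precisely by integrating the decomposition $\omega=\sum_\alpha(p_\alpha\circ H_\l)\omega_\alpha+u\,\d H_\l+\d v$ over a cycle $\delta\subseteq\{H_\l=s\}$, noting that the exact and $\d H_\l$ terms vanish and that $\deg p_\alpha\le\lceil e/(n+1)\rceil$. Your added remark on why $\deg p_\alpha\le e/(n+1)$ (using $\deg\omega_\alpha\ge0$) just makes explicit an arithmetic step the paper leaves implicit.
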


In particular, at least when the Hamiltonian is generic, $\cN(L_\l^e)$
is majorated by $\cN(\cP^{\lceil e/(n+1) \rceil}\otimes L_\l^B)$.

\section{Statement of the main result}

In this section we present the main result of the paper and deduce a
corollary concerning the zeros of Abelian integrals. We begin by
stating the general result of Petrov-Khovanskii. Our statement differs
slightly from the usual formulation in order to facilitate the analogy
to the uniform case.

To simplify the notation, when speaking about an
$(s,m,d,\ell)$-Q-system we denote by $\nu$ the number of singular
points of the system. We record the following estimate,
\begin{equation} \label{eq:singularities-estimate}
  \nu\le O(\ell^2 d).
\end{equation}
Indeed, each singular point must be a pole of one of the $\ell^2$
entries of $\Omega$, and by degree considerations each entry may admit
at most $d$ poles.
  
Let $f_1(t),\ldots,f_\ell(t)$ be $\ell$ (possibly multivalued and
singular) functions on $\C P^1$, and let $L_f$ denote the linear space
they span. Denote by $X_f$ the matrix
\begin{equation}
  X_f = \begin{pmatrix}
    f_1(t) & \cdots & f_\ell(t) \\
    f'_1(t) & \cdots & f'_\ell(t) \\
    & \vdots & \\
    f^{(\ell)}_1(t) & \cdots & f^{(\ell)}_\ell(t)
  \end{pmatrix}.
\end{equation}
Suppose that $\Omega_f=\d X_f \cdot X_f^{-1}$ is a rational matrix
function of degree $d$ which is regular and quasiunipotent.

The following result can essentially be proved by combining the
proofs of the Petrov-Khovanskii and the Varchenko-Khovanskii theorems
(see \cite{Zol}).

\begin{Thm}\label{thm:pk-envelope}
  Under the conditions of the paragraph above,
  \begin{equation}
    \cN(\cP^k\otimes\Omega_f) \le \frac{(2\nu)^{2^{\nu+1}\ell^2}-1}{2\nu-1} k +C
    \qquad \forall k\in\N,
  \end{equation}
  where $C$ is a constant depending only on $\Omega_f$ (for which a
  bound is not given). In particular, the number of zeros of a
  function in the $k$-th polynomial envelope of $L_f$ grows at most
  linearly with $k$.
\end{Thm}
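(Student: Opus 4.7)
The plan is to combine the Petrov--Khovanskii derivation-division iteration, which produces the linear dependence on $k$, with the Varchenko--Khovanskii compactness argument, which absorbs the implicit constant $C$. At the top level, I would fix a triangular domain $T\subset\C$ and a branch $g$ of an element of $\cP^k\otimes\Omega_f$ on $T$, and apply the argument principle along $\partial T$ to obtain
\begin{equation*}
\#\{t\in T:g(t)=0\}\le\frac{1}{2\pi}\,\bigl|\Var\arg g\big|_{\partial T}\bigr|+(\text{corrections at singularities on $\partial T$}).
\end{equation*}
I would then decompose $\partial T$ into small circular arcs around each singular point of $\Omega_f$ lying on $\partial T$, together with the complementary straight segments. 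On each arc, regularity and quasi-unipotence of $\Omega_f$ bound the order of every $f_i$ at the singular point via Proposition~\ref{prop:bny-order} applied in the trivial-parameter case, so $|\Var\arg g|$ along the arc is at most a constant depending only on $\Omega_f$. On each straight segment, the classical estimate $|\Var\arg g|\le\pi(2N+1)$ reduces matters to bounding the real zeros $N$ of either $\Re g$ or $\Im g$.

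The heart of the proof is then the real-zero bound for elements of $\cP^k\otimes L_f$ on a real segment, which I would establish by a Khovanskii--Rolle iteration in the spirit of Petrov. Define a derivation-division operator $D\colon g\mapsto f_1 g'-g f_1'$, or a Wronskian-based variant adapted to the full basis $f_1,\ldots,f_\ell$. Using the relation $X_f'=\Omega_f X_f$, one verifies that $D$ sends $\cP^k\otimes L_f$ into a similarly-structured envelope whose polynomial degree has dropped by $1$, while Rolle's theorem decreases the real-zero count on a fixed segment by a bounded amount depending only on $\Omega_f$. Iterating $k$ times reduces to $\cP^0\otimes L_f=L_f$ itself, whose zeros are finite in number but bounded only via the compactness argument of \cite{varchenko:finiteness,asik:finiteness}; this yields the non-explicit constant $C$. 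Each outer iteration contributes a single additive constant, while the inner Khovanskii--Rolle analysis of the boundary terms multiplies by at most $2\nu$ at each of its roughly $2^{\nu+1}\ell^2$ subdivisions, summing to the geometric-series coefficient $\frac{(2\nu)^{2^{\nu+1}\ell^2}-1}{2\nu-1}$ in front of $k$.

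The main obstacle will be the bookkeeping in the last step: one must maintain regularity and quasi-unipotence of the derived connection across successive applications of $D$, and control the spurious zeros that $D$ can introduce at the poles of the Wronskian, which coincide with the singular points of $\Omega_f$. The nested exponent $2^{\nu+1}\ell^2$ reflects an inner induction on the vector-space dimension $\ell$ paired with Khovanskii-style bookkeeping indexed by the $\nu$ singular points, and making this precise---so that the chain of function spaces at every stage stays within the hypotheses of the Rolle argument---is where essentially all of the technical work lies.
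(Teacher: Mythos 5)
There is a genuine gap, and it sits at the heart of your argument: the claim that the operator $D\colon g\mapsto f_1g'-gf_1'$ sends $\cP^k\otimes L_f$ into a ``similarly-structured envelope whose polynomial degree has dropped by $1$''. Writing $g=\sum_i p_if_i$, one gets $Dg=p_1'f_1^2+\sum_{i\ge2}\bigl(p_i'\,f_if_1+p_i\,(f_i'f_1-f_if_1')\bigr)$: only the coefficient of $f_1^2$ has its degree lowered, while the coefficients of the Wronskian terms still have degree $k$. So the iteration ``$k$ times down to $\cP^0\otimes L_f=L_f$'' does not exist as described; a correct derivation--division scheme must kill one summand at a time at the cost of introducing Wronskian-type spaces that are \emph{not} $L_f$, and whose zero counts need separate treatment. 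Worse, even a repaired Rolle iteration is structurally wrong for this theorem: each Rolle step loses a number of zeros controlled by the zeros of $f_1$ (and of the derived Wronskians) on the segment, and these are only \emph{existentially} bounded. Since you perform on the order of $k$ such steps, those non-explicit losses multiply $k$, giving a bound of the form $c(\Omega_f)\,k+C$ with $c(\Omega_f)$ non-explicit --- exactly the opposite of what the theorem asserts, where the coefficient of $k$ is the explicit quantity $\frac{(2\nu)^{2^{\nu+1}\ell^2}-1}{2\nu-1}$ and only the additive constant is allowed to be existential.

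The actual mechanism (the one the paper reproduces in explicit form in Proposition~\ref{prop:real-pk} and the folding construction) is an induction on the \emph{number of summands} $r$, not on $k$: write $f=f_1F$, apply the argument principle on a keyhole contour around the real axis, bound the arcs near singularities and near infinity by order estimates (this is where $k$ enters, once per level), and on the real segments use the Petrov trick to replace $\Var\Arg F$ by the zero count of $\Im F=\abs{f_1}^{-2}\sum_{i\ge2}p_i\Im(f_if_1^\dag)$, which is an envelope with $r-1$ summands over the symmetrized tensor system; the recursion $a_r=\nu a_{r-1}+1$ produces the geometric-series coefficient. Your proposal also omits the indispensable geometric reduction that makes this induction close up: all singularities must first be moved to the real line by the shift-and-fold (squaring) transformations, so that reflection in the segments preserves the singular locus and the recursive calls stay on the same line with the same singular set. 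Applying the Petrov trick on arbitrary sides of an arbitrary triangle, as you propose, makes the reflected system acquire mirror-image singularities and changes the geometry at every step, so the induction does not terminate with controlled constants; and it is precisely this folding construction that produces the $2\nu$ singular points and the exponent $2^{\nu+1}\ell^2$ in the stated coefficient, which your scheme does not account for.
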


The Petrov-Khovanskii result for Abelian integrals,
Theorem~\ref{thm:pk}, follows from Theorem~\ref{thm:pk-envelope} and
Corollary~\ref{cor:ai-envelope} for generic Hamiltonians. A slightly
more refined argument is needed in order to remove the genericity
assumption. We exclude this argument from our presentation, as we
shall soon see that our \emph{uniform} version of the bound
immediately extends from the generic case to the singular case.

We note that the system $\Omega_f$ arising from the formulation of
Theorem~\ref{thm:pk-envelope} satisfies the various conditions
required for a Q-system, apart from the condition of being defined
over $\Q$.  This is not a coincidence. In fact, the condition of being
defined over $\Q$ is percisely the condition responsible for the
emergence of \emph{uniform} bounds in the class of Q-systems.

We now state our main result.
\begin{Thm}\label{thm:main}
  Let $\Omega$ be an $(s,m,d,\ell)$-Q-system. Then
  \begin{equation}
    \cN(\cP^k\otimes\Omega) \le \frac{(3\nu)^{8^\nu \ell^2}-1}{3\nu-1} +s^{\exp^+(\exp^+(4^{4^\nu \ell^2}) d^5 m^5)}
  \end{equation}
\end{Thm}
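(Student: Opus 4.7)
The plan is to combine the iterated-derivation / argument-principle reduction underlying the Petrov--Khovanskii theorem (Theorem~\ref{thm:pk-envelope}), which already produces the desired linear-in-$k$ growth, with the explicit Q-system machinery of \cite{inf16} (Theorem~\ref{thm:bny-qs} and Proposition~\ref{prop:bny-order}) in order to replace the ineffective constant $C$ of Theorem~\ref{thm:pk-envelope} by an explicit function of the parameters $(s,m,d,\ell)$. The linear coefficient will inherit only a modest combinatorial inflation over Theorem~\ref{thm:pk-envelope} (the $2\nu$ becomes $3\nu$ and $2^{\nu+1}$ becomes $8^\nu$), while the additive constant inherits the doubly-exponential shape of Theorem~\ref{thm:bny-qs}.

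The first step is to recognise every intermediate space appearing in the Petrov--Khovanskii reduction as a Q-system. The polynomial envelope $\cP^k\otimes\Omega$ is a Q-system by Transformation~\ref{trans:tensor}, with its $(s,m,d,\ell)$-parameters inflated in a controlled way, and the same is true for every space $\cP^{k-j}\otimes\Omega^{(j)}$ produced by successive differentiation. Consequently, Proposition~\ref{prop:bny-order} applies uniformly at every stage: for any function $f$ in any of these spaces and any singular point $t_0$, one obtains an explicit bound on $\abs{\ord_{t_0} f}$, which by~\eqref{eq:order-lim} controls the variation of argument of $f$ along small arcs around the singularities --- the key quantity in the Petrov--Khovanskii argument.

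The second step is to execute the Petrov--Khovanskii iterated derivation $D_t$, tracking $k$-dependent and $k$-independent contributions separately. Each step maps $\cP^{k-j}\otimes\Omega^{(j)}$ into $\cP^{k-j-1}\otimes\Omega^{(j+1)}$ for a derived Q-system $\Omega^{(j+1)}$ with mildly inflated parameters; at each step a Rolle-type defect bounded by $3\nu$ (combining the $\nu$ small-loop contributions with the combinatorial Rolle and rank-drop factors) is incurred. After $8^\nu\ell^2$ iterations --- the number required to trivialise the ascending chain of derived Q-systems modulo the base --- unrolling the recursion yields the geometric sum $\frac{(3\nu)^{8^\nu\ell^2}-1}{3\nu-1}\cdot k$, which constitutes the first summand of the claimed bound. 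The remaining $k$-independent contribution is a sum of variations of argument over the singularities of the final derived Q-system plus the bare zero count in that system. Proposition~\ref{prop:bny-order} bounds the former and Theorem~\ref{thm:bny-qs} bounds the latter; feeding in the inflated Q-system parameters, which after $8^\nu\ell^2$ iterations grow by at most the factor $4^{4^\nu\ell^2}$, into the doubly-exponential shape of Theorem~\ref{thm:bny-qs} yields the additive term $s^{\exp^+(\exp^+(4^{4^\nu\ell^2})d^5 m^5)}$.

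The main obstacle will be the detailed bookkeeping of how the Q-system parameters evolve under the combined operations of tensoring with $\cP^k$ (Transformation~\ref{trans:tensor}) and iterated differentiation, and verifying that after $8^\nu\ell^2$ iterations the inflation of $(s,d,\ell)$ does not exceed the claimed factor $4^{4^\nu\ell^2}$. A secondary but delicate technical point is to ensure that regularity, quasi-unipotence, and rationality over $\Q$ are preserved by all these transformations so that Theorem~\ref{thm:bny-qs} and Proposition~\ref{prop:bny-order} remain applicable throughout the recursion; this is expected to follow from a careful audit of the Q-system category, but is the principal place where the proof requires real work beyond quoting known results.
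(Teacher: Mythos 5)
There is a genuine gap: the central mechanism you propose --- an ``iterated derivation $D_t$'' mapping $\cP^{k-j}\otimes\Omega^{(j)}$ into $\cP^{k-j-1}\otimes\Omega^{(j+1)}$, with a Rolle-type defect of $3\nu$ per step and only $8^\nu\ell^2$ iterations --- does not add up. If each step lowers the polynomial degree by one, then killing the envelope requires on the order of $k$ steps, not a number of steps independent of $k$; and if each step incurs a multiplicative factor $3\nu$, the unrolled recursion over $\sim k$ steps would give a bound growing like $(3\nu)^k$, i.e.\ exponentially in $k$, not the geometric-in-$r$, linear-in-$k$ quantity $\frac{(3\nu)^{8^\nu\ell^2}-1}{3\nu-1}\,k$. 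In the paper the linearity in $k$ comes from a completely different recursion: induction on the number $r$ of summands in $f=\sum_i p_i f_i$, via the argument principle on a keyhole domain. One writes $f=f_1 F$, bounds $\Var\Arg F$ on the small arcs and the arcs at infinity by the order estimates of Proposition~\ref{prop:bny-order} (the $+k$ enters only there), and on the \emph{real segments} uses the Petrov trick: $\Var\Arg F$ is controlled by the zeros of $\Im_{\delta_i}F$, which --- after multiplying by $\abs{f_1}^2$ --- is a combination with $r-1$ summands of functions in the Q-system $(\Omega\otimes\Omega^\ominus)^\ominus$. That is where the geometric sum in $\nu$ with exponent $r=2\ell^2$ comes from; moreover a Rolle/derivation scheme cannot replace it in the complex multivalued setting, where Rolle's theorem is unavailable.

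You also omit the step that produces the specific constants $3\nu$ and $8^\nu\ell^2$ and that constitutes the second half of the paper's proof: the Petrov trick requires \emph{real} singular points (so that the reflection $\dag$ fixes the singular locus and the boundary contains real segments), and a general Q-system must first be transformed, within the Q-system class and uniformly in $\l'$, by $\nu$ alternations of the shift and fold ($w=t^2$) transformations. This inflates $\ell$ to $2^\nu\ell$ (whence $r=2(2^\nu\ell)^2\le 8^\nu\ell^2$), the number of singularities to at most $3\nu$, and requires Lemma~\ref{lem:fold-envelope} to relate the polynomial envelope of $\Omega$ to that of the folded system, plus the $2^{O(\nu)}$ triangulation factor. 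Without this reduction to real singularities, and without the symmetrization/tensor construction that keeps $\Im\bigl(f_i f_1^\dag\bigr)$ inside the Q-system category, your appeals to Theorem~\ref{thm:bny-qs} and Proposition~\ref{prop:bny-order} do not assemble into a proof, and the claimed constants are unexplained.
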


Note that, in contrast to Theorem~\ref{thm:pk-envelope}, the bound in
Theorem~\ref{thm:main} is fully explicit. Also note that while
Theorem~\ref{thm:pk-envelope} applies to a particular set of
functions, Theorem~\ref{thm:main} applies to families of functions
depending (as Q-functions) on an arbitrary number of parameters $\l'$,
and the bound is uniform over the entire family.

Combining Theorem~\ref{thm:main} with Corollary~\ref{cor:ai-envelope},
we obtain an upper bound $\exp^{+2}(n^2)\cdot m+\exp^{+5}(n^2)$ for the
number of zeros of an Abelian integral of degree $e$ over a generic
Hamiltonian $H_\l$ of degree $n$. By the semicontinuity of the
counting function $\cN(\cdot)$ (see Remark~\ref{rem:semicontinuity})
this bound extends over the entire class of Hamiltonians, thus proving
Theorem~\ref{thm:main-ai}.

We note here that the implication above is a generally useful aspect
of the theory of Q-functions -- uniform bounds extend directly from
the generic case to degenerate cases. Approaches based on compactness
arguments usually require a more detailed analysis of the behavior
near the singular strata (see for instance the proof of
Theorem~\ref{thm:pk-z} in \cite{Zol}).

\section{Transformations of Q-systems}
\label{sec:transformations}

The approach employed by Petrov and Khovanskii in the proof of
Theorem~\ref{thm:pk-envelope} requires that we perform a number of
transformations to the functions being considered. Our objective is to
obtain uniform bounds by applying Theorem~\ref{thm:bny-qs}. It is
therefore necessary to prove that the appropriate transformations can
be carried it \emph{within} the class of Q-systems. In this section we
prove that this is indeed the case, and analyze the affect of each of
the transformations on the parameters $(s,m,d,\ell)$.

Let $\Omega$ denote an $(s,m,d,\ell)$-Q-system, and let $X(\cdot)$
denote a fundamental solution for $\Omega$. We assume that the base of
the system is $\C^m$, with an affine chart $\l=(t,\l')$.

\begin{Transformation}[Shift] \label{trans:shift} There exists an
  $(\hat s,\hat m,\hat d,\hat \ell)$-Q-system $\hat \Omega$ defined
  over the base space $\C^m\times\C$, with affine chart $\l\times\mu$,
  whose fundamental solution $\hat X(\cdot)$ is given by
  \begin{equation}
    \hat X(t,\l',\mu) = X(t+\mu,\l')
  \end{equation}
  and
  \begin{equation}
    \hat s= \poly(s,m,d,\ell),\quad \hat m=m+1,\quad \hat d=d,\quad \hat \ell=\ell
  \end{equation}
\end{Transformation}
\begin{proof}
  Suppose that
  \begin{equation}
    \Omega = \Omega_t(t,\l') \d t + \Omega_{\l'}(t,\l') \d \l'.
  \end{equation}
  Then
  \begin{equation}
    \hat \Omega = \Omega_t(t+\mu,\l') (\d t+\d \mu) + 
    \Omega_{\l'}(t+\mu,\l') \d \l'.
  \end{equation}
  Since $\hat\Omega$ has an explicit solution $\hat X(\cdot)$, it is
  clear that $\hat\Omega$ is integrable. It is also clear that the
  regularity and quasiunipotence of $\hat X(\cdot)$ follows from that
  of $X(\cdot)$.

  For the complexity analysis, it remains only to notice that we
  increased the dimension of the base space by one, and that the
  complexity of the formula for $\hat\Omega$ is polynomial in the
  complexity and the maximal degree of the formula for $\Omega$, the
  dimension of $\Omega$ and the dimension of the base space.
\end{proof}

We remark that it is generally not possible to perform a shifting
transformation by a specific \emph{fixed} value $\mu_0$. Indeed, the
formula for $\hat\Omega$ in this case would involve the specific value
$\mu_0$ which may be irrational, while explicit algebraic formulas by
our definitions may use only integer coefficients. We circumvent this
difficulty by extending the parameter space of the system with an
additional parameter $\mu$. Specific shifts of the system may be
obtained by restricting $\mu$ to $\mu_0$. The crucial condition which
allows this construction is that the system is not only a Q-system for
the fixed value $\mu_0$, but rather it is a Q-system with respect to
the free parameter $\mu$. This technique is generally useful in the
study of Q-systems, and has already appeared in the context of the
conformally invariant slope in \cite{inf16}.

We now consider the transformation of $\Omega$ that corresponds to
folding the $t$-plane by the transformation $w=t^2$.

\begin{Transformation}[Fold] \label{trans:fold} There exists an
  $(\hat s,\hat m,\hat d,\hat \ell)$-Q-system $\hat \Omega$ defined
  over the base space $\C^m$ with affine chart $w\times\l'$, whose
  fundamental solution $\hat X(\cdot)$ is given by
  \begin{equation}\label{eq:fold-fundamental-solution}
    \hat X(w,\l') = X(t,\l') \oplus \(t X(t,\l')\)
  \end{equation}
  where $w=t^2$, and
  \begin{equation}
    \hat s= \poly(s,m,d,\ell),\quad \hat m=m,\quad \hat d=d+2,\quad \hat \ell=2\ell
  \end{equation}
\end{Transformation}
\begin{proof}
  As in the proof of Transformation~\ref{trans:shift}, it is clear
  that $\hat\Omega$ is integrable and regular. To prove
  quasi-unipotence, let $\gamma$ be a small loop in the $(w,\l')$
  space. If $\gamma$ loops around a point with $w\neq0$ then it
  corresponds to a small loop in the $(t,\l')$ plane, and the
  monodromy of $\hat X(w,\l')=\diag(X(t,\l'),t X(t,\l'))$ around this
  loop is quasi-unipotent by the quasi-unipotence of $\Omega$. If
  $\gamma$ loops around a point with $w=0$ then $\gamma^2$ corresponds
  to a small loop in the $(t,\l')$ plane, and by the same reasoning we
  deduce that $M_{\gamma^2}$, the monodromy of $\hat X(w,\l')$ along
  $\gamma^2$, is quasi-unipotent. But $M_{\gamma^2}=M_\gamma^2$, and a
  matrix whose square is quasi-unipotent is itself
  quasi-unipotent. Thus $M_\gamma$ is quasi-unipotent as claimed.

  To explicitly define $\hat\Omega$, suppose that
  \begin{equation}
    \Omega = \Omega_t(t,\l') \d t + \Omega_{\l'}(t,\l') \d \l'.
  \end{equation}
  Then we may write
  \begin{equation}
    \hat \Omega(w,\l') = \diag( \Omega_t(t,\l') \d t + \Omega_{\l'}(t,\l') \d \l'
    , \Omega_t(t,\l') \d t + \Omega_{\l'}(t,\l') \d \l').
  \end{equation}
  Since $\d t = \d w/2t$ we may rewrite this expression in the form
  \begin{equation}
    \hat \Omega(w,\l') = \diag( \frac{\Omega_t(t,\l')}{2t} \d w 
    +\Omega_{\l'}(t,\l') \d \l',
    \frac{\Omega_t(t,\l')}{2t} \d w 
    +\Omega_{\l'}(t,\l') \d \l').
  \end{equation}
  We now replace each occurence of $t^2$ by $w$, giving an expression
  \begin{equation}
    \hat \Omega(w,\l') = \diag( \Omega_0(w,\l')+t \Omega_1(w,\l'),
    \Omega_0(w,\l')+t \Omega_1(w,\l')).
  \end{equation}
  Finally, since the second block in $\hat X$ is equal to $t$
  multiplied by the first block, we may rewrite this as
  \begin{equation}
    \hat \Omega(w,\l') = \diag( \Omega_0(w,\l')+1/t^2 \Omega_1(w,\l') ,
    \Omega_1(w,\l')+\Omega_0(w,\l')),
  \end{equation}
  which is an explicit expression for $\hat\Omega$. It is clear that
  the complexity of this expression is polynomial in $s,m,d,\ell$, the
  base space dimension is unchanged, the dimension of $\hat\Omega$ is
  $2\ell$, and the maximal degree of the coefficients of $\hat\Omega$
  is at most $d+2$.
\end{proof}

\begin{Rem}\label{rem:fold-singularities}
  If the singular points of $\Omega$ for a specific value of $\l'$
  form a set $\{s_j\}$, then the singular values of $\hat\Omega$ form
  the set $\{s_j^2\}\cup\{0,\infty\}$ since $0$ and $\infty$ are the
  two critical values of the folding map.
\end{Rem}

We next consider symmetrization of $\Omega$ around the real line. This
transformation was analyzed in \cite[3.2]{inf16}. We state here only the
result and omit the proof (which is straightforward).

For convenience we introduce the following notation. The
\emph{reflection} of a function $f(t)$ along the real line is given by
\begin{equation}
  f^\dag(t) = \overline{f(\overline t)}.
\end{equation}
If $f$ is multivalued then one may select an analytic germ of $f$ at
some point on the real line, reflect this germ, and analytically
continue the result. In cases where this choice is significant we
shall state the point of reflection explicitly. We will also use the
$\dag$ notation for vector and matrix valued functions in the obvious
way. In this paper the reflection is always taken with respect to the
time variable $t$.

\begin{Transformation}[Symmetrization] \label{trans:symm} There exists
  an $(\hat s,\hat m,\hat d,\hat \ell)$-Q-system
  $\Omega^\ominus=\hat \Omega$ defined over the same base space as
  $\Omega$, whose fundamental solution $\hat X(\cdot)$ is given by
  \begin{equation}
    \hat X(t,\l') = X(t,\l') \oplus X^\dag(t,\l'),
  \end{equation}
  and
  \begin{equation}
    \hat s= \poly(s,m,d,\ell),\quad \hat m=m,\quad \hat d=d,\quad \hat \ell=2\ell
  \end{equation}
\end{Transformation}

\begin{Rem}
  The key feature of the symmetrization transform is that the
  corresponding solution spaces $L_{\l'}(\hat\Omega)$ are closed under
  taking real and imaginary parts on the real line. Indeed, for any
  $f(t)\in L_{\l'}(\hat\Omega)$ we have also
  $f^\dag(t)\in L_{\l'}(\hat\Omega)$, and therefore
  \begin{eqnarray}
    \Re f = \frac{1}{2} \( f(t) + f^\dag(t) \) \in L_{\l'}(\hat\Omega) \\
    \Im f = \frac{1}{2i} \( f(t) - f^\dag(t) \) \in L_{\l'}(\hat\Omega)
  \end{eqnarray}
\end{Rem}

For completeness we also list the two canonical transformations of
direct sum and tensor product. Here we let $\Omega_i$ denote an
$(s_i,m,d_i,\ell_i)$-Q-system with fundamental solution $X_i(\cdot)$
for $i=1,2$, defined over a common base space. We again omit the proofs (which are straightforward).

\begin{Transformation}[Direct Sum] \label{trans:sum} There exists an
  $(\hat s,\hat m,\hat d,\hat \ell)$-Q-system $\Omega_1\oplus\Omega_2$
  defined over the same base space as $\Omega_{1,2}$, whose fundamental
  solution is given by $X_1\oplus X_2$, and
  \begin{equation}
    \hat s= s_1+s_2,\quad \hat m=m,\quad \hat d=\max(d_1,d_2),\quad \hat \ell=\ell_1+\ell_2
  \end{equation}
\end{Transformation}

\begin{Transformation}[Tensor Product] \label{trans:tensor} There
  exists an $(\hat s,\hat m,\hat d,\hat \ell)$-Q-system
  $\Omega_1\otimes\Omega_2$ defined over the same base space as
  $\Omega_{1,2}$, whose fundamental solution is given by $X_1\otimes X_2$,
  and
  \begin{equation}
    \hat s= \poly(s_{1,2},m_{1,2},d_{1,2},\ell_{1,2}),\quad
    \hat m=m,\quad \hat d=\max(d_1,d_2),\quad \hat \ell=\ell_1 \ell_2
  \end{equation}
\end{Transformation}

\begin{Rem}
  Here we use $\otimes$ to denote the tensor product of $\Omega_{1,2}$ as
  \emph{connections}, but in order to avoid confusion we note that the
  matrix form representing this connection is in fact
  $\(\Omega_1\otimes I\)\oplus \(I\otimes\Omega_2\)$.
\end{Rem}

\section{Demonstration of the main result}

In this section we present the demonstration of
Theorem~\ref{thm:main}. The proof follows the same strategy as the
Petrov-Khovanskii proof of Theorem~\ref{thm:pk-envelope}. We first
assume that all singular points of the system $\Omega$ are real. In
this case it is possible to control the variation of argument by
applying a clever inductive argument due to Petrov. For the general case, we show
that the system may be transformed to a system with real singular
points, and invoke the preceding case.

Recall that we denote by $L_\l'$ the space of all linear combinations
of solutions of the system $\Omega$ for a fixed value $\l'$, viewed as
functions of $t$ (see \eqref{eq:solution-space}).

\subsection{The case of real singular points}
In this subsection we assume that all singular points of $\Omega$ are
real.

\begin{Prop}\label{prop:real-pk}
  Let $\Omega$ be an $(s,m,d,\ell)$-Q-system, and let $\l'$ be a paramter
  such that the singular locus of the system $\Omega_{\l'}$
  is contained in $\R$. Let $r,k\in\N$ and denote
  \begin{equation}\label{eq:real-pk-f}
    f(t) = \sum_{i=1}^{r} p_i(t) f_i(t) 
    \qquad \forall i \left\{
      \begin{array}{cc}
        p_i(t) \in \R[t] \\
        \deg p_i(t) \le k \\
        f_i\in L_{\l'}
      \end{array}
    \right.
  \end{equation}
  Finally, recall that we denote by $\nu$ the number of singular points of $\Omega$. Then
  \begin{equation}
    \cN(f) \le \frac{\nu^r-1}{\nu-1} k +s^{\alpha(m,d,\ell,r)}, \qquad 
    \alpha(m,d,\ell,r) = \exp^+ (8^r \ell^{5\cdot 2^{r+1}} d^5 m^5)
  \end{equation}
\end{Prop}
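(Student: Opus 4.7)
\smallskip
\textbf{Proof proposal.} The plan is to induct on the number of envelope terms $r$, following the classical Petrov strategy adapted to the Q-system setting.

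For the base case $r=1$, the function $f=p_1 f_1$ has at most $k$ zeros from $p_1$ plus at most $\cN(\Omega)\le s^{2^{\poly(m,d,\ell)}}$ zeros from $f_1$ (Theorem~\ref{thm:bny-qs}), both of which fit inside the claimed bound since $\frac{\nu-1}{\nu-1}k=k$ and $\alpha(m,d,\ell,1)\ge \poly(m,d,\ell)$ with room to spare.

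For the inductive step, exploit that all $\nu$ singular points of $\Omega_{\l'}$ lie on $\R$, so together with $\infty$ they partition $\R$ into at most $\nu$ open arcs. On each arc $J$, carry out the Rolle-based reduction: on $J\setminus\{f_r=0\}$ the zeros of $f$ coincide with those of $f/f_r$, and by Rolle's theorem are bounded by $1$ plus the zeros of $\tilde f:=f'f_r-ff_r'$ on $J$. A direct expansion gives
\[
  \tilde f \;=\; p_r'\,f_r^{\,2} \;+\; \sum_{i<r}\bigl(p_i'\,f_i f_r+p_i\,W(f_i,f_r)\bigr),\qquad W(f_i,f_r):=f_i'f_r-f_i f_r',
\]
so $\tilde f$ is itself a polynomial envelope of degree $\le k$, now valued in an enlarged Q-system $\hat\Omega$ obtained from $\Omega$ by combining Transformations~\ref{trans:sum} and~\ref{trans:tensor} (essentially the first jet of $\Omega\otimes\Omega$), with parameters $\hat s=\poly(s,m,d,\ell)$, $\hat m=m$, $\hat d=O(d)$, $\hat\ell=O(\ell^2)$. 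Grouping the pair $\{f_if_r,W(f_i,f_r)\}$ into a single coordinate of $\hat\Omega$ lets us view $\tilde f$ as an envelope with effectively $r-1$ terms, so the inductive hypothesis applies on each arc. Summing the resulting bounds over all $\nu$ arcs, absorbing the real zeros of $f_r$ inside each arc via the $r=1$ case applied to $f_r\in L_{\l'}$, and controlling the variation of $\arg f$ on small detours around each real singular endpoint via Proposition~\ref{prop:bny-order} together with~\eqref{eq:order-lim}, produces the Petrov-type recursion
\[
  N_r(k)\;\le\;\nu\,N_{r-1}(k)+k+C_r,
\]
whose solution is precisely $\frac{\nu^r-1}{\nu-1}k$ plus the additive constant. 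Propagating parameters through the induction, each round replaces $\ell$ by $O(\ell^2)$, $d$ by $O(d)$, and $s$ by $\poly(s)$, so that after $r$ rounds the effective Q-system has dimension $\ell^{O(2^r)}$; a single terminal invocation of Theorem~\ref{thm:bny-qs}, whose internal polynomial is $O^+(d\ell^4 m)^5$, then yields a contribution of the form $s^{\exp^+(8^r\ell^{5\cdot 2^{r+1}}d^5m^5)}$, matching $s^{\alpha(m,d,\ell,r)}$.

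\smallskip
\textbf{Main obstacle.} The delicate point is showing that the Petrov reduction genuinely produces an envelope with only $r-1$ terms over $\hat\Omega$, rather than the $2r-1$ terms that a naive expansion of $\tilde f$ gives; this requires treating the pair $\{f_i f_r, W(f_i,f_r)\}$ as a single coordinate of the enlarged Q-system, which is exactly what the combined tensor/direct-sum construction of Section~\ref{sec:transformations} makes possible. A secondary subtlety is ensuring that the boundary contribution at each real singular endpoint adds only $O(k)$ per induction level and does not grow with the (rapidly increasing) parameters $\hat s,\hat\ell$; here Proposition~\ref{prop:bny-order} is indispensable, since it bounds $\abs{\ord_{t_0}f_i}$ explicitly and hence caps the variation of $\arg f$ on small arcs around each singularity.
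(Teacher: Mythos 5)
There is a genuine gap, and it sits exactly where you flagged your ``main obstacle''. Your inductive step is the classical division--derivation (Rolle/Wronskian) reduction, and after dividing by $f_r$, differentiating and clearing denominators you correctly obtain
\begin{equation*}
  \tilde f \;=\; p_r'\,f_r^{\,2}+\sum_{i<r}\bigl(p_i'\,f_i f_r+p_i\,W(f_i,f_r)\bigr),
\end{equation*}
which is an envelope with $2r-1$ terms, each carrying its \emph{own independent} polynomial coefficient. The induction hypothesis (i.e.\ the proposition itself) applies to expressions $\sum_j q_j(t)g_j(t)$ with one scalar polynomial per scalar Q-function; declaring the pair $\{f_if_r,\,W(f_i,f_r)\}$ to be ``a single coordinate'' of an enlarged Q-system does not change the fact that two independent polynomials $p_i'$ and $p_i$ appear, so the term count relevant to the induction is $2r-1$, not $r-1$, and the recursion $N_r\le\nu N_{r-1}+k+C_r$ does not close -- it would read $N_r\le\nu N_{2r-1}+\dots$, which is not a well-founded induction. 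A second, independent problem is that $\cN(f)$ is by definition the supremum over triangular \emph{complex} domains (possibly touching the singular locus), so Rolle's theorem on the real arcs between consecutive singular points only controls real zeros and cannot by itself yield the stated bound.

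The paper's proof avoids both issues by a different mechanism. It works on a keyhole domain $U$ around one real segment (any triangle avoiding the real singular points meets $\R$ in at most one such segment, by convexity, so bounding $\cN_U(f)$ suffices), writes $f=f_1F$ with $F=p_1+\sum_{i\ge2}p_i f_i/f_1$, and applies the argument principle: $\cN_U(f)\le\cN_U(f_1)+(2\pi)^{-1}\Var\Arg F|_{\partial U}$. The contributions of the small arcs around the singularities and around infinity are bounded by Proposition~\ref{prop:bny-order} and~\eqref{eq:order-lim} (this is where your use of those results belongs), while on the real segments the Petrov trick gives $\Var\Arg F\le\pi(\cN\,\Im F+1)$, and since the $p_i$ are \emph{real} on $\R$,
\begin{equation*}
  \Im F=\abs{f_1}^{-2}\sum_{i\ge2}p_i\,\Im\bigl(f_i f_1^\dag\bigr),
\end{equation*}
an envelope with exactly $r-1$ terms, each with a single real polynomial coefficient, over the symmetrized tensor Q-system $(\Omega\otimes\Omega^\ominus)^\ominus$ of dimension $4\ell^2$ (here the reality of the singular locus guarantees no new singularities appear). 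It is this ``take the imaginary part on the real line'' step -- not the Wronskian reduction -- that genuinely lowers the number of terms by one and makes the recursion $\frac{\nu^r-1}{\nu-1}k+s^{\alpha}$ close. If you want to keep a Rolle-type argument you would need to formulate and prove a stronger vector-valued induction hypothesis accommodating several polynomial coefficients per block, which is a different (and unproven) statement.
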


\begin{figure}[h]
  \centerline{\includegraphics
    [width=0.5\textwidth]{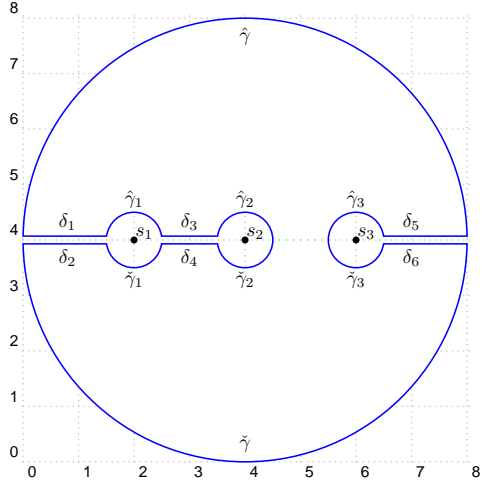}} \caption{Contour
    of integration}\label{fig:keyhole}
\end{figure}

\begin{proof}
  Let the domain $U$ and its boundary $\Gamma$, partitioned as the
  union of the curves
  $\delta_i,\hat\gamma_i,\check\gamma_i,\hat\gamma,\check\gamma$, be
  as indicated in figure~\ref{fig:keyhole} where the radius of each
  $\gamma_i$ (resp. $\gamma$) may be arbitrarily small
  (resp. large).
  Notice that one segment of the real domain is in fact
  contained in $U$ (indicated by a dotted line in the figure).
  Since any triangle avoiding the singular points can intersect
  at most one such segment, and since we can select $U$ to
  contain any single segment, it follows that to bound $\cN(f)$
  it will suffice to bound $\cN_U(f)$ independently of
  the radii defining $U$. We proceed by induction on $r$.

  When $r=1$, we have $f(t)=p_1(t) f_1(t)$. Thus by
  Theorem~\ref{thm:bny-qs}
  \begin{equation}
    \cN_U(f) \le \cN_U(f_1) + k \le C_1 + k
  \end{equation}
  where
  \begin{equation}
    C_1 = s^{\exp^+((d\ell^4 m)^5)},
  \end{equation}
  giving the desired conclusion.

  For arbitrary $r$, we proceed by applying the argument principle. We
  first rewrite $f(t)$ as
  \begin{equation}
    f(t) = \sum_{i=1}^{r} p_i(t) f_i(t) = f_1(t) F(t)
  \end{equation}
  where
  \begin{align}
    F(t) &= p_1(t) + \sum_{i=2}^{r} p_i(t) \frac{f_i(t)}{f_1(t)}
    \label{eq:F-quotient} \\
    &= p_1(t) + \abs{f_1(t)}^{-2} \sum_{i=2}^{r} p_i(t) f_i(t)
    \overline{f_1(t)}.
    \label{eq:F-product}
  \end{align}
  By Theorem~\ref{thm:bny-qs} and the argument principle,
  \begin{equation}\label{eq:f-zeros}
    \cN_U(f) = \cN_U(f_1) + \cN_U(F) \le C_1 + (2\pi)^{-1} \Var\Arg F(t)\big|_\Gamma.
  \end{equation}
  We consider the variation of argument on each piece of $\Gamma$
  separately.

  The arcs $\hat\gamma_i,\check\gamma_i$ are traversed in reverse
  orientation. Therefore we need to bound the variation of argument
  along these arcs from below. By~\eqref{eq:order-lim} the
  contribution of each arc approaches $\pi \ord F\big|_{t=s_i}$ as
  $\e\to0$. By Proposition~\ref{prop:bny-order}, the order of each
  $f_i$ is bounded in absolute value by
  \begin{equation}
    C_2 = s^{(d\ell)^{O(m)}}.
  \end{equation}
  Using~\eqref{eq:F-quotient} we deduce that
  $\ord F\big|_{t=s_i} \ge -2C_2$. Therefore
  \begin{equation}\label{eq:vararg-gamma1}
    \Var\Arg F(t)\big|_{\hat\gamma_i,\check\gamma_i} \le 2\pi C_2 
    \qquad i=1,\ldots,\nu.
  \end{equation}

  Similarly, the arcs $\hat\gamma,\check\gamma$ may be seen as small
  circular arcs around the point at infinity. We argue as above,
  noting that in this case the order of each $p_j(t)$ is bounded from
  below by $-k$. It follows that $\ord F\big|_{t=\infty} \ge
  -2C_2-k$. Therefore
  \begin{equation}\label{eq:vararg-gamma2}
    \Var\Arg F(t)\big|_{\hat\gamma,\check\gamma} \le \pi(2C_2+k).
  \end{equation}

  It remains to consider the variation of argument along the segments
  $\delta_i$. Assume that $F(t)$ is not purely real on $\delta_i$
  (otherwise there is no variation of argument). The key observation is that
  \begin{equation}
    \Var\Arg F(t)\big|_{\delta_i} \le \pi (\cN_{\delta_i} \Im_{\delta_i} F(t) + 1) 
  \end{equation} 
  where $\Im_{\delta_i}$ denotes the imaginary part taken with respect
  to the segment $\delta_i$. This fact, known as ``the Petrov trick'',
  is a simple topological consequence of the fact that the variation
  of argument of a curve contained in a half-plane is at most $\pi$.

  Using \eqref{eq:F-product} and noting that $p_j(t)$ is real on the
  real line for every $j$, we see that on $\delta_i$
  \begin{equation}\begin{split}
      \Im_{\delta_i} F(t) &= \abs{f_1(t)}^{-2} \sum_{i=2}^{r} p_i(t)
      \Im_{\delta_i} \(
      f_i(t) \overline{f_1(t)} \) \\
      &= \abs{f_1(t)}^{-2} G(t)
    \end{split}\end{equation}
  where (taking reflection with respect to $\delta_i$),
  \begin{equation}
    G(t) = \sum_{i=2}^{r} p_i(t) \Im_{\delta_i} 
    \( f_i(t) f^\dag_1(t) \).
  \end{equation}
  We used the fact that $f(\overline t)=f^\dag(t)$ on $\delta_i$.

  Let $\hat\Omega=\(\Omega\otimes\Omega^\ominus\)^\ominus$.
  Then $\hat\Omega$ is a $(\poly(s,m,d,\ell),m,d,4\ell^2)$-Q-system,
  and
  \begin{equation}
    \Im_{\delta_i} f_i(t)f^\dag_1(t) \in L_{\l'}(\hat\Omega)
    \qquad i=2,\ldots,r.
  \end{equation}
  Note that $\hat\Omega_{\l'}$ has the same singularities as
  $\Omega_{\l'}$, since the singular locus of $\Omega_{\l'}$ is
  contained in $\R$, which is the set of fixed point for the
  reflection $\dag$. We may now apply the inductive hypothesis to
  $G(t)$, since the formula defining it only involves $r-1$ summands.
  \begin{equation}\label{eq:vararg-delta}
    \begin{split}
      \Var\Arg F(t)\big|_{\delta_i} & \le \pi (\cN_{\delta_i} \Im_{\delta_i} F(t)+1)    \\
      & \le \pi (\cN_{\delta_i} G(t)+1)  \\
      & \le
      \pi\(\frac{\nu^{r-1}-1}{\nu-1} k
      +s^{\alpha(m,d,4\ell^2,r-1)} +1 \)
    \end{split}
  \end{equation}

  Using \eqref{eq:f-zeros} and summing up the variation of argument
  along $\Gamma$ using \eqref{eq:vararg-gamma1},
  \eqref{eq:vararg-gamma2} and \eqref{eq:vararg-delta} we finally
  obtain
  \begin{equation}\begin{split}
      \cN_U(f) &\le C_1 + 2\nu C_2 + (2C_2+k)
      +\nu \(\frac{\nu^{r-1}-1}{\nu-1} k +s^{\alpha(m,d,4\ell^2,r-1)} + 1  \) \\
      & \le \frac{\nu^r-1}{\nu-1} k + s^{\alpha(m,d,\ell,r)},
    \end{split}\end{equation}
  where all summands not involving $k$ are absorbed by the factor
  $s^{\alpha(m,d,\ell,r)}$ (using the estimate \eqref{eq:singularities-estimate}).

  This finishes the inductive argument.
\end{proof}

\begin{Rem}
In the proof above, we implicitly assume that  $f(t)$ does not
vanish on the boundary of $U$, so that the variation of argument
is well defined. This is a technical difficulty which can easily
be avoided. Indeed, one can define the variation of argument by
slightly deforming the boundary so that the zeros move to the
exterior of $U$, and taking the limit over the size of the
deformation. With this notion, the estimates in the proof hold
without any assumption.
\end{Rem}

\begin{Cor}\label{cor:main-real-singularities}
  Let $\Omega$ be an $(s,m,d,\ell)$-Q-system and let $\l'$ be a paramter
  such that the singular locus of the system $\Omega_{\l'}$
  is contained in $\R$. Then
  \begin{equation}
    \cN(\cP^k\otimes\Omega) \le \frac{\nu^{2\ell^2}-1}{\nu-1} k+s^{\beta(m,d,\ell)}
    \qquad \beta(m,d,\ell) = \exp^+(\exp^+(4^{\ell^2}) d^5 m^5)
  \end{equation}
\end{Cor}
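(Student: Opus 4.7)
The plan is to deduce this corollary from Proposition~\ref{prop:real-pk} by first converting the complex polynomial coefficients appearing in $\cP^k\otimes\Omega$ into real ones. Let $f\in\cP^k\otimes L_{\l'}(\Omega)$ be arbitrary, and write $f(t)=\sum_{i=1}^{\ell} p_i(t)f_i(t)$ with $p_i\in\C[t]$ of degree at most $k$ and $\{f_i\}$ a basis of $L_{\l'}(\Omega)$.

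First, I would split each $p_i=u_i+iv_i$ with $u_i,v_i\in\R[t]$ of degree at most $k$. Since $L_{\l'}(\Omega)$ is a complex linear space, the functions $if_i$ also belong to $L_{\l'}(\Omega)$, so we may recast
\[
  f(t)=\sum_{i=1}^{\ell} u_i(t)f_i(t)+\sum_{i=1}^{\ell} v_i(t)\bigl(if_i(t)\bigr),
\]
which exhibits $f$ as a real-polynomial-coefficient combination of $2\ell$ functions in $L_{\l'}(\Omega)$. The assumption that the singular locus of $\Omega_{\l'}$ is contained in $\R$ is carried over unchanged, so that the hypotheses of Proposition~\ref{prop:real-pk} are verified with $r=2\ell$.

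Next, I would apply Proposition~\ref{prop:real-pk} with this value of $r$, yielding
\[
  \cN(f)\le \frac{\nu^{2\ell}-1}{\nu-1}k+s^{\alpha(m,d,\ell,2\ell)}.
\]
Since $2\ell\le 2\ell^2$ for $\ell\ge 1$, the coefficient of $k$ is majorated by $(\nu^{2\ell^2}-1)/(\nu-1)$, as demanded.

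The main (though routine) remaining task is the bookkeeping needed to check $\alpha(m,d,\ell,2\ell)\le\beta(m,d,\ell)$. Unfolding the definition $\alpha(m,d,\ell,r)=\exp^+(8^r\ell^{5\cdot 2^{r+1}}d^5m^5)$, the argument of the outer $\exp^+$ at $r=2\ell$ is dominated by $\ell^{5\cdot 2^{2\ell+1}}=2^{O(2^{2\ell}\log\ell)}$, whereas the corresponding argument in $\beta(m,d,\ell)=\exp^+(\exp^+(4^{\ell^2})d^5m^5)$ already contains the factor $\exp^+(4^{\ell^2})=2^{O(2^{2\ell^2})}$. The gap between the exponents $2^{2\ell}$ and $2^{2\ell^2}$ is far more than enough to absorb $\alpha(m,d,\ell,2\ell)$ into $\beta(m,d,\ell)$, which completes the deduction.
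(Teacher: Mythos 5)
There is a genuine gap, and it lies in your very first step: the claim that an arbitrary element of $L_{\l'}(\cP^k\otimes\Omega)$ can be written as $\sum_{i=1}^{\ell} p_i(t)f_i(t)$ with $\{f_i\}$ a basis of $L_{\l'}(\Omega)$ consisting of $\ell$ functions. The scalar solution space attached to an $\ell\times\ell$ Q-system is, by \eqref{eq:solution-space}, the set of functions $c\cdot f$ with $c\in\C^\ell$ and $f$ a vector solution; it is spanned by the $\ell^2$ entries $x_{ij}$ of the fundamental matrix, not by $\ell$ functions. Correspondingly, a general element of $L_{\l'}(\cP^k\otimes\Omega)$ (the tensor-product system) has the form $\sum_{i,j} p_{ij}(t)\,x_{ij}(t)$ with $\ell^2$ independent polynomial coefficients of degree at most $k$; only special elements (e.g.\ polynomial combinations of the components of a single vector solution, as in the Abelian-integral application) reduce to $\ell$ summands. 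So your application of Proposition~\ref{prop:real-pk} with $r=2\ell$ does not cover the whole envelope, and the bound you derive, $\frac{\nu^{2\ell}-1}{\nu-1}k+s^{\alpha(m,d,\ell,2\ell)}$, is not proved for all $f$.

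The paper's own proof is exactly your argument with the correct count: write $f=\sum_{j=1}^{r}p_jf_j$ with $r=\ell^2$ generators $f_j\in L_{\l'}$, split $p_j$ into real and imaginary parts using $if_j\in L_{\l'}$, and apply Proposition~\ref{prop:real-pk} with $r=2\ell^2$ --- which is why the stated coefficient of $k$ is $\frac{\nu^{2\ell^2}-1}{\nu-1}$ and why $\beta(m,d,\ell)=\exp^+(\exp^+(4^{\ell^2})d^5m^5)$ is sized as it is: one needs $\alpha(m,d,\ell,2\ell^2)=\exp^+\bigl(8^{2\ell^2}\ell^{5\cdot 2^{2\ell^2+1}}d^5m^5\bigr)\le\beta(m,d,\ell)$, and the factor $\exp^+(4^{\ell^2})$ absorbs precisely the term $\ell^{5\cdot 2^{2\ell^2+1}}=2^{O(4^{\ell^2}\log\ell)}$. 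The ``far more than enough'' slack you observed between $2^{2\ell}$ and $2^{2\ell^2}$ was the symptom of the undercount: with the correct $r$ there is essentially no slack. Your remaining steps (the real/imaginary splitting and the monotonicity of the bound in $r$) are fine once $r=2\ell^2$ is used.
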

\begin{proof}
  Every function $f\in L_{\l'}(\cP^k\otimes\Omega)$ may be written as
  \begin{equation}
    \begin{split}
      &f(t) = \sum_{j=1}^{r} p_j(t) f_j(t) \\
      &= \sum_{j=1}^{r} \(\Re p_j(t)\)
      f_j(t) + \sum_{j=1}^{r} \(\Im p_j(t)\) if_j(t)
    \end{split}
    \qquad \forall j \left\{
      \begin{array}{cc}
        p_j(t) \in \C[t] \\
        \deg p_j(t) \le k \\
        f_j\in L_{\l'}
      \end{array}
    \right.
  \end{equation}
  The right hand side is an expression of the
  form~\eqref{eq:real-pk-f} with $r=2\ell^2$. Applying
  Proposition~\ref{prop:real-pk} we obtain the bound stated above.
\end{proof}

\subsection{The general case}

To prove the general case, we transform the system to have real
singular points, and appeal to the result of the preceding
subsection. The transformation must be made within the class of
Q-systems, and uniform over the parameter space $\l'$.

Consider the following sequence of Q-systems $\Omega_j$. Let
$\Omega_0=\Omega$, and define $\Omega_{j+1}$ to be the system obtained
from $\Omega_j$ by applying the shifting transformation followed by
the folding transformation (we will denote the shifting parameter
introduced at this step $\mu_j$). Set $\hat\Omega=\Omega_\nu$ and
$\mu=(\mu_1,\ldots,\mu_\nu)$.

We claim that for every $\l'$, there is an appropriate choice of $\mu$
such that $\hat\Omega$ has real singularities for $(\l',\mu)$. More
specifically, we claim that for an appropriate choice of $\mu$, the
system $\Omega_j$ will admit at most $\nu-j$ non-real singularities.

To see this, we proceed by induction. The original system $\Omega$
admits at most $\nu$ singular points for any fixed value of the
parameter $\l'$. For step $j$, select some non-real singular point $s$
of $\Omega_j$ (assuming there is such a point), and set $\mu_j=-\Re
s$.
Then the shift transforms $s$ to a purely imaginary point. The
following fold transforms this point to the real line, transforms
singularities already on the real line back to the real line, and only
introduces new singularities at $0$ and $\infty$ (see
Remark~\ref{rem:fold-singularities}). This concludes the induction. A
direct computations shows that $\hat\Omega$ is a
$(\poly(s,m,\ell)^{O(\nu)},m+\nu,d+2\nu,2^\nu\ell)$-Q-system. The number of singularities
of the new system is at most $3\nu$.

We require a final preparatory lemma on the interaction between
polynomial envelopes and the folding transformation.
\begin{Lem}\label{lem:fold-envelope}
  For every value of $\l',\mu$ we have
  \begin{equation}
    L_{\l'}(\cP^{2^{k+1}-1}\otimes\Omega)\subseteq L_{\l',\mu}(\cP^k\otimes\hat\Omega)
  \end{equation}
\end{Lem}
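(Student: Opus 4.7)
My plan is to exploit the explicit form of the fold and shift transformations comprising $\hat\Omega=\Omega_\nu$. The shift step $t\mapsto t+\mu_j$ is a variable substitution that preserves polynomial degree, hence it contributes nothing to polynomial envelope containment, and the problem reduces to iterating a single-fold comparison $\nu$ times.

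For the single-fold step, Transformation~\ref{trans:fold} gives that, expressed as a function of $t$, the fundamental solution of $\Omega_{j+1}$ has entries of the form $f(t)$ or $t f(t)$, where $f$ ranges over entries of $X_j$. A polynomial $p(w)$ of degree at most $k$ in $w=t^2$ is exactly an even polynomial $p(t^2)$ of degree at most $2k$ in $t$. Thus any element of $L_{\l',\mu_{j+1}}(\cP^k\otimes\Omega_{j+1})$, viewed as a function of $t$, is a finite sum
\[
\sum_i \bigl(a_i\,p_i(t^2)+b_i\,t\,p_i(t^2)\bigr) f_i(t) = \sum_i q_i(t) f_i(t),
\]
with $\deg q_i\le 2k+1$ and $f_i\in L_{\l'}(\Omega_j)$; conversely, every polynomial $q(t)$ of degree at most $2k+1$ splits uniquely as $q(t) = q_e(t^2) + t\,q_o(t^2)$ with $\deg q_e,\deg q_o\le k$, which realizes $q(t) f(t)$ inside $\cP^k\otimes\Omega_{j+1}$. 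I therefore obtain the single-fold identity
\[
L_{\l'}(\cP^{2k+1}\otimes\Omega_j)=L_{\l',\mu_{j+1}}(\cP^k\otimes\Omega_{j+1}).
\]

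Iterating this identity through the $\nu$ folds, the degree recursion $k\mapsto 2k+1$, equivalently $k+1\mapsto 2(k+1)$, sends $k$ to $2^\nu(k+1)-1$ after $\nu$ steps, whence
\[
L_{\l'}(\cP^{2^\nu(k+1)-1}\otimes\Omega) \subseteq L_{\l',\mu}(\cP^k\otimes\hat\Omega),
\]
from which the claim follows upon comparing exponents. I anticipate no serious obstacle: the heart of the argument is the parity decomposition $q(t)=q_e(t^2)+t\,q_o(t^2)$ together with the explicit basis match between $\cP^k\otimes\Omega_{j+1}$ and $\cP^{2k+1}\otimes\Omega_j$. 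The only bookkeeping is to unpack the composite variables $t,w_1,w_2,\dots$ through the tower of folds and to verify that the shifts $\mu_j$ do not interact with the parity decomposition -- both of which are direct, since each shift is applied strictly before the next fold and thus leaves the relevant decomposition intact.
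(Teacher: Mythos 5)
Your argument is correct and is essentially the paper's own proof: discard the shifts as degree-preserving, use the parity decomposition $q(t)=q_e(t^2)+t\,q_o(t^2)$ (i.e.\ $\cP^{2k+1}(t)=\cP^{k}(w)\otimes\cP^{1}(t)$) together with the explicit fold solution to get the single-fold identity, and iterate over the $\nu$ folds. The exponent $2^{\nu}(k+1)-1$ you arrive at is the natural outcome of this iteration (the stated $2^{k+1}-1$ appears to be a slip in the paper's formulation, and is not in general below $2^{\nu}(k+1)-1$); what is actually used afterwards is only $\cN(\cP^k\otimes\Omega)\le 2^{O(\nu)}\cN(\cP^k\otimes\hat\Omega)$, which your inclusion delivers since $k\le 2^{\nu}(k+1)-1$.
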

\begin{proof}
  It clearly suffices to prove that
  \begin{equation}\label{eq:fold-envelope-step}
    L_{\l'}(\cP^{2k+1}\otimes\Omega_j)\subseteq L_{\l',\mu}(\cP^k\otimes\Omega_{j+1})
    \qquad j=1,\ldots,d-1.
  \end{equation}
  We may ignore the shift transform which (for any fixed value of
  $\mu_j$) only introduces a constant additive factor to the time
  variable and does not
  affect~\eqref{eq:fold-envelope-step}. Henceforth we assume that
  $\Omega_{j+1}$ is simply the fold of $\Omega_j$.

  Let $t$ denote the time variable of $\Omega_j$, and $w=t^2$ denote
  the time variable of $\Omega_{j+1}$. For the sake of clarity we
  write $\cP^\bullet(t),\cP^\bullet(w)$ to denote classes of
  polynomials in $t$ and $w$ respectively. Then
  \begin{equation}\begin{split}
      L_{\l'}(\cP^{2k+1}(t)\otimes\Omega_j) &=
      L_{\l'}(\cP^{k}(w)\otimes\cP^1(t)\otimes\Omega_j)
      \\
      &= L_{\l'}(\cP^{k}(w)\otimes\Omega_{j+1})
    \end{split}\end{equation}
  where the last step follows directly
  from~\eqref{eq:fold-fundamental-solution}.
\end{proof}

Finally we observe that any triangular domain $T$ in the $t$-plane
avoiding the singular locus of $\Omega$ maps under the composed
shifting and folding transforms to a domain covered by $2^{O(\nu)}$
triangles in the time domain of $\hat \Omega$. This observation,
combined with Lemma~\ref{lem:fold-envelope} and
Corollary~\ref{cor:main-real-singularities} gives
\begin{equation}\begin{split}
    \cN(\cP^k\otimes\Omega) &\le 2^{O(\nu)} \cN(\cP^k\otimes\hat\Omega) \\
    &\le \frac{(3\nu)^{8^\nu \ell^2}-1}{3\nu-1} +s^{\exp^+(\exp^+(4^{4^\nu \ell^2}) (d+2\nu)^5 (m+\nu)^5)} \\
    &= \frac{(3\nu)^{8^\nu \ell^2}-1}{3\nu-1} +s^{\exp^+(\exp^+(4^{4^\nu \ell^2}) d^5 m^5)}    
  \end{split}\end{equation}
This concludes the proof of Theorem~\ref{thm:main}.

\subsection{Concluding Remarks}
\label{sec:concluding-remarks}

The repeated-exponential nature of the bound in
Theorem~\ref{thm:main-ai} is clearly excessive. We have therefore
opted to emphasize clarity of exposition over optimality of the
analysis.  In fact, a relatively straightforward (though more
technically involved) computation using the proof of \cite{inf16} produces an
improved estimate of only four repeated exponents.

A key factor in the size of the bound is played by our construction
(following Petrov and Khovanskii) of a composite folding
transformation which moves all exisitng singularities of the system to
the real line, while only introducing new singularities at real
points. A more efficient construction of this type would yield better
estimates. We discuss a conjectural improvement of this type below.

Let $S=\{s_1,\dots,s_\nu\}\subset\C$. A polynomial $q$ is called a
\emph{folding polynomial} for $S$ if $q(S)\subset\R$ and $q$ admits
only real critical values. The change of variable $w=q(t)$, analogous
to our basic folding transformation $w=t^2$, moves the points of $S$
to the real line while only creating ramification points at the (real)
critical values of $q$. The following conjecture, in this context, has
already appeared in \cite{roitman-msc}.

\begin{Conj}
  For every $s_1,\dots,s_\nu\in\C$, there exists a folding polynomial
  $q$ of degree $O(\nu)$.
\end{Conj}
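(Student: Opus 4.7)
The plan is to translate the conjecture into the geometry of the real preimage $q^{-1}(\R)\subset\C$. For a polynomial $q$ of degree $n$ all of whose critical values are real, this preimage is a connected embedded graph -- a ``real dessin'' consisting of $\R$ together with a system of arcs emanating transversally from each real critical point of $q$ and running to infinity, branching only at preimages of critical values. The condition $q(s_j)\in\R$ is simply that each $s_j$ lies on this graph, so the conjecture amounts to producing, for every $\nu$-point set $S\subset\C$, a real dessin of $O(\nu)$ edges passing through every point of $S$ and realisable as $q^{-1}(\R)$ for some polynomial $q$. A crude dimension count makes this plausible: the moduli of polynomials with all real critical values, modulo real affine action on the source, has real dimension comparable to $n$ and is parametrised essentially by the $n-1$ real critical values, while the conditions $q(s_j)\in\R$ amount to $\nu$ real equations -- so the heuristic solvability threshold is $n=\Theta(\nu)$, matching the conjecture and showing that the multiplicative blow-up produced by iterated quadratic folding is far from tight.

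To upgrade this heuristic I would combine Riemann existence -- which realises any admissible combinatorial tree type as the dessin of some polynomial of the prescribed degree -- with a topological sweep argument. Fix a combinatorial tree type with a prescribed pattern of asymptotic directions; as the $n-1$ real critical values vary over $\R^{n-1}$, each non-real arc of the dessin sweeps out an open ``wedge'' of $\C\setminus\R$, and a Brouwer-degree or intersection-theoretic computation should force the $\nu$ constraints ``arc $i(j)$ passes through $s_j$'' to be simultaneously satisfiable once the wedges collectively cover $\C$ and $n=O(\nu)$. A more concrete alternative is a divide-and-conquer recursion on $S$: split $S$ along a line into halves $S_+,S_-$, recursively produce folding polynomials $q_\pm$ of degree $O(\abs{S_\pm})$, and glue them via a Chebyshev-type composite whose degree \emph{adds} rather than multiplies -- in contrast to the quadratic-fold construction of the main text, where each non-real singular point doubles the degree.

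The main obstacle is controlling the global topology of the moduli-to-position map, and in particular ruling out degenerations where critical values collide and arcs retract off the prescribed configuration before every $s_j$ has been captured. The iterated quadratic fold of the main text is self-contained precisely because it handles one point at a time and only ever creates new real critical values at $0$ and $\infty$; any linear-degree construction must instead arrange for distinct arcs to absorb several points simultaneously, and the real technical content lies in showing that the associated real dessin can be deformed so that this happens uniformly across all configurations $S$. Producing the right combinatorial type, and proving that the corresponding moduli map has non-zero topological degree onto the space of target configurations, is where I expect the genuine new geometric input to be required.
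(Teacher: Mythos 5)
The statement you are addressing is stated in the paper as an open \emph{conjecture} (attributed to \cite{roitman-msc}); the paper gives no proof of it, and your text does not supply one either. What you have written is a research programme, and you yourself concede the decisive point at the end: nothing in the proposal establishes that the ``moduli-to-position'' map has non-zero topological degree, and that is exactly where the content of the conjecture lies. The dimension count ($n-1$ real critical values versus $\nu$ real conditions $q(s_j)\in\R$) only shows that $n=\Theta(\nu)$ is not excluded; it does not rule out the relevant map having image of positive codimension, nor does it control the non-properness caused by critical values colliding or escaping to infinity, which is precisely the degeneration you flag. Riemann existence does let you realise a prescribed combinatorial tree as $q^{-1}(\R)$ for \emph{some} polynomial with prescribed real critical values, but the step from ``each arc sweeps a wedge as the critical values vary'' to ``the $\nu$ point constraints are simultaneously satisfiable'' is an intersection-theoretic claim for which no argument is given, and it is not a routine transversality statement: one must choose a single combinatorial type (or a bounded family of types) whose arcs can be steered through an \emph{arbitrary} configuration $S$, and it is entirely unclear that any fixed type of degree $O(\nu)$ has this property.

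The divide-and-conquer alternative is worse off: there is no known operation that ``glues'' folding polynomials $q_\pm$ for $S_\pm$ into a folding polynomial for $S_+\cup S_-$ with \emph{additive} degree. Composition multiplies degrees (this is exactly why the paper's iterated shift-and-square construction is exponential in $\nu$), and sums or products of two polynomials with real critical values need not have real critical values, nor need they map $S_\mp$ to $\R$. A ``Chebyshev-type composite'' preserves real critical values only in the composed (hence multiplicative) sense. So as written, neither route closes the gap; the conjecture remains open, and your proposal correctly identifies, but does not overcome, the genuine difficulty.
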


We note that the construction employed in the present paper, involving
repeated shifting and squaring, produces folding polynomials of
\emph{exponential} degree. Assuming the conjecture above, and
generalizing our treatment of Transformation~\ref{trans:fold}, it is
possible to improve our bound to a form involving only 3 repeated
exponents.

In any case, the techniques of this paper rely heavily on the results
of \cite{inf16}, and correspondingly the bounds obtained must be \emph{at
  least} doubly-exponential. It is very likely that this growth rate
is still highly excessive. Furthr improvements will probably require
completely new ideas.


\providecommand{\arxivno}[1]{http://arxiv.org/abs/#1}

\def\BbbR{$\mathbb R$}\def\BbbC{$\mathbb
  C$}\providecommand\cprime{$'$}\providecommand\mhy{--}\font\cyr=wncyr8\def\Bb%
bR{$\mathbb R$}\def\BbbC{$\mathbb
  C$}\providecommand\cprime{$'$}\providecommand\mhy{--}\font\cyr=wncyr9\def\Bb%
bR{$\mathbb R$}\def\BbbC{$\mathbb
  C$}\providecommand\cprime{$'$}\providecommand\mhy{--}\font\cyr=wncyr9\def\cp%
rime{$'$}


\begin{thebibliography}{10}

\bibitem{inf16}
Gal Binyamini, Dmitry Novikov, and Sergei Yakovenko.
\newblock On the number of zeros of abelian integrals.
\newblock {\em Invent. Math.}, 181(2):227--289, 2010.

\bibitem{gavrilov:petrov-modules}
L.~Gavrilov.
\newblock Petrov modules and zeros of {A}belian integrals.
\newblock {\em Bull. Sci. Math.}, 122(8):571--584, 1998.

\bibitem{centennial}
Yu.~S. Ilyashenko.
\newblock Centennial history of {H}ilbert's 16th problem.
\newblock {\em Bull. Amer. Math. Soc. (N.S.)}, 39(3):301--354 (electronic),
  2002.

\bibitem{sergeibook}
Yulij Ilyashenko and Sergei Yakovenko.
\newblock {\em Lectures on analytic differential equations}, volume~86 of {\em
  Graduate Studies in Mathematics}.
\newblock American Mathematical Society, Providence, RI, 2008.

\bibitem{asik:finiteness}
A.~Khovanskii.
\newblock Real analytic manifolds with the property of finiteness, and complex
  abelian integrals.
\newblock {\em Funktsional. Anal. i Prilozhen.}, 18(2):40--50, 1984.

\bibitem{redundant}
D.~Novikov and S.~Yakovenko.
\newblock Redundant {P}icard-{F}uchs system for {A}belian integrals.
\newblock {\em J. Differential Equations}, 177(2):267--306, 2001.

\bibitem{roitman-msc}
M.~Roitman.
\newblock M.sc. thesis dissertation, unpublished.

\bibitem{varchenko:finiteness}
A.~N. Varchenko.
\newblock Estimation of the number of zeros of an abelian integral depending on
  a parameter, and limit cycles.
\newblock {\em Funktsional. Anal. i Prilozhen.}, 18(2):14--25, 1984.

\bibitem{bdd-decomposition-pf}
Sergei Yakovenko.
\newblock Bounded decomposition in the {B}rieskorn lattice and {P}faffian
  {P}icard-{F}uchs systems for {A}belian integrals.
\newblock {\em Bull. Sci. Math.}, 126(7):535--554, 2002.

\bibitem{h16survey}
Sergei Yakovenko.
\newblock Quantitative theory of ordinary differential equations and the
  tangential {H}ilbert 16th problem.
\newblock In {\em On finiteness in differential equations and {D}iophantine
  geometry}, volume~24 of {\em CRM Monogr. Ser.}, pages 41--109. Amer. Math.
  Soc., Providence, RI, 2005.

\bibitem{Zol}
H.~{\.Z}o{\polishL}{\c a}dek.
\newblock {\em The monodromy group}, volume~67 of {\em Instytut Matematyczny
  Polskiej Akademii Nauk. Monografie Matematyczne (New Series) [Mathematics
  Institute of the Polish Academy of Sciences. Mathematical Monographs (New
  Series)]}.
\newblock Birkh\"auser Verlag, Basel, 2006.

\end{thebibliography}
\end{document}